\newtheorem{defn}{Definition}[section]
\newtheorem{lemma}[defn]{Lemma}
\newtheorem{prop}[defn]{Proposition}
\newtheorem{cor}[defn]{Corollary}
\newtheorem{problem}{Problem}
\newtheorem{conj}[defn]{Conjecture}
\newtheorem{theorem}[defn]{Theorem}
\newcommand*{\QED}{\null\hfill$\Box$\par\medskip}%
\renewcommand{\pod}[1]{\mathchoice
  {\allowbreak \if@display \mkern 18mu\else \mkern 8mu\fi (#1)}
  {\allowbreak \if@display \mkern 18mu\else \mkern 8mu\fi (#1)}
  {\mkern4mu(#1)}
  {\mkern4mu(#1)}
}
\title{Independence Equivalence Classes of Paths and Cycles}
\author{Iain Beaton\\
\small Department of Mathematics \& Statistics\\[-0.8ex]
\small Dalhousie University\\[-0.8ex] 
\small Halifax, CA\\
\small\tt iain.ac.beaton@gmail.com\\
\and
Jason I. Brown\thanks{Supported by NSERC grant 170450-2013}\\
\small Department of Mathematics \& Statistics\\[-0.8ex]
\small Dalhousie University\\[-0.8ex] 
\small Halifax, CA\\
\small\tt Jason.Brown@dal.ca\\
\and
Ben Cameron\thanks{Corresponding author}\\
\small Department of Mathematics \& Statistics\\[-0.8ex]
\small Dalhousie University\\[-0.8ex]
\small Halifax, CA\\
\small\tt cameronb@mathstat.dal.ca
}
\begin{document}

\tikzset{bignode/.style={minimum size=3em,}}
\maketitle

\begin{abstract}
The independence polynomial of a graph is the generating polynomial for the number of independent sets of each size. Two graphs are said to be \textit{independence equivalent} if they have equivalent independence polynomials. We extend previous work by showing that independence equivalence class of every odd path has size 1, while the class can contain arbitrarily many graphs for even paths. We also prove that the independence equivalence class of every even cycle consists of two graphs when $n\ge 2$ except the independence equivalence class of $C_6$ which consists of three graphs. The odd case remains open, although, using irreducibility results from algebra, we were able show that for a prime $p \geq 5$ and $n\ge 1$ the independence equivalence class of $C_{p^n}$ consists of only two graphs.
\end{abstract}

\setstretch{1.4}

\section{Introduction}\label{sec:intro}

A subset of vertices of a (finite, undirected and simple) graph $G$ is called {\em independent} if the subset induces a subgraph with no edges (the \emph{independence number} of  $G$ is the size of the largest independent set in $G$ and is denoted by $\alpha(G)$, or just $\alpha$ if the graph is clear from context). The {\em independence polynomial} of $G$, denoted by $i(G,x)$ is defined by 
\[ i(G,x)=\sum_{k=0}^{\alpha}i_kx^k,\] 
where $i_k$ is the number of independent sets of size $k$ in $G$. Research on the independence polynomial has been very active since it was first defined in 1983 \cite{INDFIRST,INDPOLY,Chudnovsky2007,Alavi,BDN2000,INDROOTS}

We say that two unlabelled graphs $G$ and $H$, are \textit{independence equivalent}, denoted $G \sim H$, if they have the same independence polynomial. Independence equivalence is clearly an equivalence relation, so we define the \textit{independence equivalence class} of a graph $G$, denoted $[G]$, to be the set of all graphs that are independence equivalent to $G$. If a graph is the only graph in its independence equivalence class, we call this graph \textit{independence unique}. As an example, $P_4$ and $K_3\cup K_1$, both of which have independence polynomial $1+4x+3x^2$, are independence equivalent. On the other hand, each complete graph $K_n$, is independence unique as it is the only graph with independence polynomial $1+nx$. As a graph is completely determined by its independent sets of cardinality $2$, (that is, the non-edges of the graph), it is interesting to see what information is encoded when we do not have access to all of the independent sets but only the information encoded by the independence polynomial (that is, only how many of each size there are). As we have seen, the combinatorial information about the independent sets is not enough to completely distinguish a graph (it cannot even determine whether a a graph is connected). In fact Makowsky and Zhang \cite{Makowksy2017} showed the proportion of independence unique graphs to graphs tends to zero as the order (that is, the number of vertices) tends to infinity. Independence uniqueness and independence equivalence is also of interest in analogy to the corresponding notion for the  chromatic polynomial, the \emph{chromaticity} of a graph (see  chapters 4,5, and 6 of \cite{DongKohTeo2005}). 
In \cite{Makowsky2014} the authors consider equivalence and uniqueness of a general polynomial arising from a graph, and they also raise the point that one reason to study graph polynomials is to help distinguish non-isomorphic graphs.  

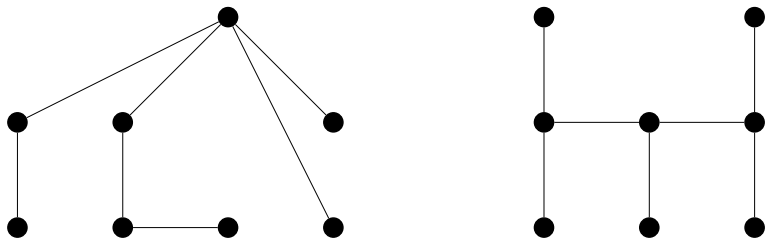
\begin{figure}[htp]
\def\c{0.7}
\def\r{2}
\centering
\scalebox{\c}{
\begin{tikzpicture}
\begin{scope}[every node/.style={circle,thick,draw,fill}]
    \node (1) at (0*\r,0*\r) {};
    \node (2) at (1*\r,0*\r) {};
    \node (3) at (2*\r,0*\r) {};
    \node (4) at (2*\r,1*\r) {};
    \node (5) at (0*\r,-1*\r) {};
    \node (6) at (0*\r,1*\r) {};
    \node(7) at (1*\r,-1*\r) {};
    \node (8) at (2*\r,-1*\r) {};   
\end{scope}

\begin{scope}
    \path [-] (1) edge node {} (2);
    \path [-] (2) edge node {} (3);
    \path [-] (4) edge node {} (3);
    \path [-] (1) edge node {} (5);
    \path [-] (1) edge node {} (6);
    \path [-] (2) edge node {} (7);
    \path [-] (3) edge node {} (8);
\end{scope}

\begin{scope}[every node/.style={circle,thick,draw,fill}]
    \node (9) at (-5*\r,0*\r) {};
    \node (10) at (-4*\r,0*\r) {};
    \node (11) at (-3*\r,-1*\r) {};
    \node (12) at (-2*\r,0*\r) {};
    \node (13) at (-2*\r,-1*\r) {};
    \node (14) at (-5*\r,-1*\r) {};
    \node (15) at (-4*\r,-1*\r) {};
    \node (16) at (-3*\r,1*\r) {};   
\end{scope}

\begin{scope}
    \path [-] (9) edge node {} (16);
    \path [-] (10) edge node {} (16);
    \path [-] (12) edge node {} (16);
    \path [-] (16) edge node {} (13);
    \path [-] (10) edge node {} (15);
    \path [-] (15) edge node {} (11);
    \path [-] (9) edge node {} (14);
\end{scope}

\end{tikzpicture}}
\caption{Independence equivalent trees on $8$ vertices.}%
\label{fig:equivnonwctrees}%
\end{figure}

Returning to independence, in \cite{Stevanovic1997}, Stevanovic showed threshold graphs are independence unique among threshold graphs, doing so from the clique polynomial point of view.  
There is work done by Brown and Hoshino \cite{BrownHoshino2012} that provides a full characterization of independence unique circulant graphs and in the process determines some constructions to obtain graphs that are independence equivalent to circulant graphs. In \cite{Levit2008}, Levit and Mandrescu showed well-covered spiders are independence unique among well-covered graphs. 

However, even for the path $P_{n}$ and cycle $C_{n}$ of order $n$, determining the independence equivalence classes is tricky and subtle (much more so than for other graph polynomials). Chism \cite{Chism2009} showed that  $[P_{2n}]$ contains a few families of graphs (we will expand upon in Section~\ref{sec:paths}) (Zhang \cite{Zhang2012} proved  the same results via different techniques). In \cite{Li2016}, the authors showed the only tree in $[P_n]$ is $P_n$ itself. Most recently, Oboudi \cite{Oboudi2018} completely determined all connected graphs in the independence equivalence classes of cycles. In this work, we extend the results of Oboudi \cite{Oboudi2018} and Li \cite{Li2016} by considering which disconnected graphs can be in $[P_n]$ and $[C_n]$ respectively.

This paper is structured as follows: Section~\ref{sec:paths} is devoted to exploring $[P_n]$. For odd $n$ we show that $P_n$ is independence unique, whereas for even $n$ there can be arbitrarily many nonisomorphic graphs in $[P_n]$. In Section~\ref{sec:cycles}, we consider $[C_n]$, using very different methods depending on the parity of $n$. We find that when $n$ is even (and $n\neq 6$), or a prime power where the base is at least $5$, then $[C_n]=\{C_n,D_n\}$ where $D_n$ is the graph obtained by gluing a leaf of $P_{n-3}$ to one vertex of a triangle (see Figure~\ref{fig:Ln}). Our results for paths and even cycles involve combinatorial analysis that comes from analyzing the coefficients. Our results for prime cycles and prime power cycles, however, are proved using algebraic results by examining the reducibility of the polynomials.

\section{Independence Equivalence Classes of Paths}\label{sec:paths}
The independence equivalence class of a path has been considered before in \cite{Zhang2012,Chism2009}, where the it was shown that there are at least $2$ disconnected graphs in $[P_{2n}]$ and in \cite{Li2016} where it was shown that the only connected graph in $[P_n]$ is $P_n$ itself. For independence polynomials, the highly structured nature of paths allows for an explicit formula for paths:
\begin{theorem}[Arocha, \cite{Arocha1984}] The independence polynomial of a path of order $n$ is given by
\label{thm:PathPoly}

$$i(P_n,x) = \sum_{j=0}^{\lfloor \frac{n+1}{2} \rfloor} \binom{n+ 1 -j}{j}x^j.$$
\QED
\end{theorem}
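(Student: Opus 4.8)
The plan is to prove the formula combinatorially by interpreting each coefficient directly. Since $i(P_n,x)=\sum_k i_k x^k$ where $i_k$ counts the independent sets of size $k$, it suffices to show that the number of independent sets of size $j$ in $P_n$ equals $\binom{n+1-j}{j}$. Labelling the vertices of $P_n$ as $1,2,\dots,n$ along the path, a set of vertices is independent precisely when it contains no two consecutive labels, so the task reduces to counting the $j$-element subsets $\{a_1<a_2<\cdots<a_j\}\subseteq\{1,\dots,n\}$ satisfying $a_{k+1}-a_k\ge 2$ for every $k$.

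The key step is a standard \emph{gap bijection}. I would define $b_k=a_k-(k-1)$ for each $k$. The non-consecutivity condition $a_{k+1}-a_k\ge 2$ is then equivalent to $b_{k+1}>b_k$, and one checks $b_1=a_1\ge 1$ and $b_j=a_j-(j-1)\le n-(j-1)=n-j+1$, so $\{b_1<\cdots<b_j\}$ ranges over all $j$-element subsets of $\{1,\dots,n-j+1\}$. The map is invertible via $a_k=b_k+(k-1)$, hence a bijection, and therefore the number of size-$j$ independent sets is exactly $\binom{n-j+1}{j}=\binom{n+1-j}{j}$, establishing the claimed coefficient.

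As a cross-check and alternative route, I would deduce the same formula from the deletion recurrence obtained by conditioning on whether the end vertex $n$ lies in an independent set: $i(P_n,x)=i(P_{n-1},x)+x\,i(P_{n-2},x)$, with base cases $i(P_0,x)=1$ and $i(P_1,x)=1+x$. Comparing the coefficient of $x^j$ on both sides reduces the inductive step to Pascal's identity, since $\binom{n-j}{j}+\binom{n-j}{j-1}=\binom{n-j+1}{j}$.

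Finally, I would address the summation range. The top index $\lfloor (n+1)/2\rfloor$ is exactly the maximum size of an independent set in $P_n$, and this is consistent with the binomial form: $\binom{n+1-j}{j}=0$ once $j>(n+1)/2$, because then $n+1-j<j$. I do not expect a genuine obstacle here, as the result is elementary; the only care needed is verifying the endpoint behaviour of the bijection (that $b_1\ge 1$ and $b_j\le n-j+1$) and confirming the base cases of the recurrence, which is precisely where an off-by-one slip would most easily arise.
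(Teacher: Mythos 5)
Your proof is correct, but note that the paper itself offers no proof to compare against: this theorem is quoted as a known result of Arocha \cite{Arocha1984} and stated with an immediate \QED, so your argument fills in what the paper delegates to the literature. Your main route, the gap bijection $b_k=a_k-(k-1)$ sending size-$j$ independent sets of $P_n$ to $j$-subsets of $\{1,\dots,n-j+1\}$, is the standard combinatorial derivation, and you correctly identify the only delicate points (the endpoint bounds $b_1\ge 1$ and $b_j\le n-j+1$, which you verify). Your secondary route via the recurrence $i(P_n,x)=i(P_{n-1},x)+x\,i(P_{n-2},x)$ is also sound; it is worth observing that this recurrence is exactly the specialization of Proposition~\ref{prop:deletion}(i) to an end vertex of the path, which is the computational tool the paper itself uses repeatedly (e.g.\ to expand $i(C_p,x)=i(P_{p-1},x)+x\,i(P_{p-3},x)$ in Proposition~\ref{prop:primecycles}), so the inductive proof is the one most consonant with the paper's machinery, while the bijection proof is more self-contained and explains the binomial coefficient directly. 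Your handling of the summation range is also right: the upper limit $\lfloor (n+1)/2\rfloor$ is the independence number of $P_n$, and since $\binom{n+1-j}{j}=0$ whenever $j>(n+1)/2$, the formula is insensitive to extending the sum, so no off-by-one issue arises.
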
 

Recently, Li, Liu, and Wu \cite{Li2016} completely classified all {\it connected} graphs in $[P_n]$ for all $n$

\begin{theorem}[\cite{Li2016}]\label{cor:connectedpathunique}
For any connected graph $G$ and $n \in \mathbb{N}$, if $i(G,x)=i(P_n,x)$ then $G \cong P_n$.
\QED
\end{theorem}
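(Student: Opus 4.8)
The plan is to first show that any connected $G$ with $i(G,x)=i(P_n,x)$ must be a tree, and then that $P_n$ is the unique tree realizing its independence polynomial. Since independence equivalence means all coefficients $i_k$ agree, I would read the low-order coefficients of $i(P_n,x)$ off Arocha's formula (Theorem~\ref{thm:PathPoly}): the coefficient of $x$ is $i_1=\binom{n}{1}=n$, so $G$ has exactly $n$ vertices; the coefficient of $x^2$ is $i_2=\binom{n-1}{2}$, and $i_2$ counts the non-edges of $G$, so $G$ has $\binom{n}{2}-\binom{n-1}{2}=n-1$ edges. A connected graph on $n$ vertices with $n-1$ edges is a tree, so the theorem reduces to showing that the only tree with $i(G,x)=i(P_n,x)$ is $P_n$ itself. (For $n\le 3$ there is essentially one tree, so the claim is immediate; the work is for $n\ge 4$.)

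For the tree case, the crucial observation is that $i_2$ is the \emph{same} for every tree of order $n$ (all have $n-1$ edges), so the first coefficient that can separate trees is $i_3$. I would compute $i_3$ from the degree sequence. Since trees are triangle-free, a $3$-subset of vertices fails to be independent exactly when it induces one or two edges; the two-edge subsets are precisely the $P_3$'s, of which there are $\sum_v\binom{d(v)}{2}$, and a short count of the one-edge subsets over the $n-1$ edges then yields
\[
 i_3 \;=\; \binom{n}{3} - (n-1)(n-2) + \sum_{v\in V(G)} \binom{d(v)}{2}.
\]
Thus, among trees of order $n$, minimizing $i_3$ is equivalent to minimizing $\sum_v d(v)^2$ subject to $d(v)\ge 1$ and $\sum_v d(v)=2(n-1)$.

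The final step is to pin down the minimizer. Writing $\sum_v (d(v)-2)^2 = \sum_v d(v)^2 - 4n + 8$ and noting $\sum_v (d(v)-2) = 2(n-1)-2n = -2$, I would minimize $\sum_v (d(v)-2)^2$ over integers with each summand $\ge -1$ and total $-2$. The minimum value $2$ forces exactly two summands equal to $-1$ and the rest $0$, i.e. exactly two vertices of degree $1$ and all others of degree $2$. A tree with exactly two leaves is a path (a non-path tree has a vertex of degree $\ge 3$ and hence at least three leaves), so the minimizer is realized uniquely by $P_n$. Consequently every tree $G\not\cong P_n$ has $\sum_v d(v)^2 > 4n-6$ and therefore $i_3(G) > i_3(P_n)=\binom{n-2}{3}$, contradicting $i(G,x)=i(P_n,x)$.

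The argument is fairly direct, and the one place needing care is the last step: one must argue not only that the path minimizes $\sum_v d(v)^2$ but that the minimizing degree sequence is realized by \emph{only} the path, which is where the "exactly two leaves implies path" fact does the real work. An alternative route through the total count $i(G,1)$ (the Merrifield--Simmons/Prodinger--Tichy extremal result for trees) would instead require a branch-shifting induction to establish uniqueness of the extremal tree; the single-coefficient $i_3$ computation above is what lets me sidestep that harder global argument.
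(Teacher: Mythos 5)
Your proof is correct; I checked each step. The coefficients $i_1=n$ and $i_2=\binom{n-1}{2}$ do force $G$ to be a connected graph on $n$ vertices with $\binom{n}{2}-\binom{n-1}{2}=n-1$ edges, hence a tree; your triangle-free $i_3$ formula is right (it is exactly Theorem~\ref{thm_i3} with $m=n-1$ and $n(C_3)=0$); the identity $\sum_v (d(v)-2)^2=\sum_v d(v)^2-4n+8$ together with $\sum_v (d(v)-2)=-2$ correctly reduces the problem to minimizing a sum of squares of integers, each at least $-1$, with total $-2$; and the minimum value $2$ is attained only when exactly two entries equal $-1$ and the rest equal $0$, i.e.\ only by the degree sequence $(1,1,2,\ldots,2)$, which among trees is realized solely by $P_n$. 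You also correctly identified the crux: it is the \emph{uniqueness} of the minimizing degree sequence, plus the fact that a tree with maximum degree $2$ is a path, that turns the extremal bound into a uniqueness statement. One point of comparison worth knowing: this paper does not actually prove the statement at all --- it is quoted from Li, Liu, and Wu \cite{Li2016} and stamped with a QED box --- so there is no in-paper argument to measure yours against. What your argument provides is a short, self-contained proof built from precisely the machinery this paper develops for cycles: the degree-sequence reading of $i_3$ (Theorem~\ref{thm_i3}) and the style of counting used in Lemma~\ref{lem:eq3} and Theorem~\ref{thm:evencycle}; it also sidesteps the Merrifield--Simmons-type extremal induction you mention, since a single coefficient already separates $P_n$ from every other tree.
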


    However, independence equivalence does not necessarily put a restriction on connectivity. In this section we will consider what disconnected graphs can belong to $[P_n]$. 
    We start by showing that even paths are very different in the disconnected case. We will show that we can have arbitrarily many graphs in the independence equivalence classes of even paths. To do this, we build on the basic results in \cite{Chism2009,Zhang2012} that provides an example of a disconnected graph in $[P_n]$ for even $n$.

\begin{prop}[\cite{Chism2009,Zhang2012}]\label{prop:chismpath}
$P_{2n}\sim P_{n-1}\cup C_{n+1}$ for $n\ge 2$.
\QED
\end{prop}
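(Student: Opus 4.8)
The plan is to verify the polynomial identity directly. Since the independence polynomial is multiplicative over disjoint unions, $i(P_{n-1}\cup C_{n+1},x)=i(P_{n-1},x)\,i(C_{n+1},x)$, so it suffices to prove
$$i(P_{2n},x)=i(P_{n-1},x)\,i(C_{n+1},x).$$
I would abbreviate $p_m=i(P_m,x)$ and $c_m=i(C_m,x)$, using the conventions $p_0=1$ and $p_1=1+x$; every index appearing below is nonnegative for $n\ge 2$, so no degenerate cases arise.

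First I would record two standard deletion identities. Conditioning on whether a fixed endpoint of a path lies in an independent set gives the recurrence $p_m=p_{m-1}+x\,p_{m-2}$. Conditioning on a fixed vertex $v$ of a cycle gives $c_{n+1}=p_n+x\,p_{n-2}$: if $v$ is excluded we count independent sets of $C_{n+1}-v\cong P_n$, and if $v$ is included its two neighbours are forbidden, leaving $P_{n-2}$ and contributing the factor $x$. Substituting this expression for $c_{n+1}$ into the product reduces the goal to the purely path-theoretic identity
$$p_{2n}=p_{n-1}p_n+x\,p_{n-1}p_{n-2}.$$

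The heart of the argument is an \emph{addition formula} $p_{a+b+1}=p_a p_b+x\,p_{a-1}p_{b-1}$, which I would prove combinatorially by viewing $P_{a+b+1}$ as two paths $P_a$ and $P_b$ joined through a central cut-vertex $w$ (so the vertices split as $a$, then $w$, then $b$). Splitting the independent sets according to whether $w$ is used yields the two terms exactly: excluding $w$ disconnects the graph into $P_a\cup P_b$, giving $p_a p_b$, while including $w$ deletes its two neighbours and leaves $P_{a-1}\cup P_{b-1}$, giving $x\,p_{a-1}p_{b-1}$. Taking $a=n$ and $b=n-1$ then gives $p_{2n}=p_n p_{n-1}+x\,p_{n-1}p_{n-2}$, which is precisely what remained to be shown.

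The only delicate point is the addition formula; although it can also be obtained by induction from the path recurrence $p_m=p_{m-1}+x\,p_{m-2}$, the cut-vertex decomposition makes it transparent and sidesteps any manipulation of the Arocha binomial sum of Theorem~\ref{thm:PathPoly}. To finish I would simply check the boundary index $b-1=n-2$, which equals $0$ when $n=2$ (where $p_0=1$), confirming the chain of equalities holds for all $n\ge 2$.
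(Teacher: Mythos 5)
Your proof is correct. There is nothing in the paper to compare it against line by line: the paper states this proposition as a quoted result of Chism and Zhang and marks it $\Box$ without reproducing any argument, so your write-up supplies a self-contained proof that the paper leaves to its references. Moreover, your derivation stays entirely within tools the paper itself provides: both of your deletion identities are instances of Proposition~\ref{prop:deletion}(i) (deleting an endpoint of a path gives $p_m=p_{m-1}+x\,p_{m-2}$; deleting a vertex of the cycle gives $c_{n+1}=p_n+x\,p_{n-2}$, which is exactly the expansion the paper itself uses later in Proposition~\ref{prop:primecycles}), and multiplicativity over components is Proposition~\ref{prop:deletion}(ii). The one genuinely new ingredient is the addition formula $p_{a+b+1}=p_ap_b+x\,p_{a-1}p_{b-1}$, and your cut-vertex argument for it is sound --- it is again Proposition~\ref{prop:deletion}(i) applied at the middle vertex $w$, with the two resulting graphs being $P_a\cup P_b$ and $P_{a-1}\cup P_{b-1}$. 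Specializing to $a=n$, $b=n-1$ gives $p_{2n}=p_np_{n-1}+x\,p_{n-1}p_{n-2}=p_{n-1}\left(p_n+x\,p_{n-2}\right)=p_{n-1}c_{n+1}$, which is the claim, and your boundary check at $n=2$ (where $p_{n-2}=p_0=1$) covers the only degenerate index. The net effect is a short proof by polynomial recurrences that avoids both the closed form of Theorem~\ref{thm:PathPoly} and any appeal to the cited literature; the paper's citation-only treatment buys brevity, while your argument buys self-containment at the cost of one auxiliary identity.
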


\begin{figure}[htp]
\def\c{0.7}
\def\r{2}
\centering
\scalebox{\c}{
\begin{tikzpicture}
\begin{scope}[every node/.style={circle,thick,fill,draw}]
    \node[label=above:$u_2$,draw] (2) at (1.13397*\r,0.5*\r) {};
    \node[label=above:$u_3$,draw] (3) at (2*\r,0*\r) {};
    \node[label=above:$u_{n-1}$,draw] (4) at (4*\r,0*\r) {};
    \node[label=above:$u_n$,draw] (5) at (5*\r,0*\r) {};
    \node[label=below:$u_1$,draw] (7) at (1.13397*\r,-0.5*\r) {};
    \node[label=above:$u_4$,draw] (8) at (3*\r,0*\r) {};   
\end{scope}

\begin{scope}

    \path [-] (2) edge node {} (3);
    \path [-] (4) edge node {} (5);
    \path [-] (3) edge node {} (7);

    \path [-] (2) edge node {} (7);
    \path [-] (3) edge node {} (8);
    
\end{scope}

\path (8) -- node[auto=false]{\ldots} (4);
\path [-] (8) edge node {} (3*\r+.7,0*\r) ;
\path [-] (4*\r-.7,0*\r) edge node {} (4);
\end{tikzpicture}}
\caption{The graph $D_n$}%
\label{fig:Ln}%
\end{figure}
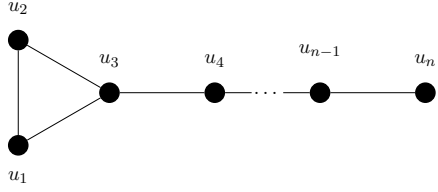

\begin{prop}[\cite{Chism2009,Zhang2012}]\label{prop:chismcycle}
$C_n\sim D_n$ for $n\ge 3$ (where $D_n$ is formed from a triangle by adding a pendant path  -- see Figure~\ref{fig:Ln}).
\QED
\end{prop}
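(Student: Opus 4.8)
The plan is to show that $i(C_n,x)$ and $i(D_n,x)$ satisfy one and the same expression in terms of path polynomials, which then forces them to be equal. The key tool is the standard vertex-deletion (fundamental) recurrence for the independence polynomial: for any vertex $v$ of a graph $G$,
\[ i(G,x) = i(G-v,x) + x\, i(G-N[v],x), \]
where $N[v]$ is the closed neighbourhood of $v$. This follows by splitting the independent sets of $G$ according to whether they contain $v$: those avoiding $v$ are counted by $i(G-v,x)$, while those containing $v$ must avoid all of $N(v)$ and so are counted by $x\, i(G-N[v],x)$.

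First I would apply this recurrence to $C_n$ at an arbitrary vertex $v$. Deleting $v$ from the cycle leaves the path $P_{n-1}$, and deleting the closed neighbourhood $N[v]$ (that is, $v$ together with its two cycle-neighbours) leaves $P_{n-3}$, giving
\[ i(C_n,x) = i(P_{n-1},x) + x\, i(P_{n-3},x). \]
Next I would apply the same recurrence to $D_n$, taking $v$ to be the degree-$2$ triangle vertex that does not lie on the pendant path (the vertex $u_1$ in Figure~\ref{fig:Ln}). Deleting $u_1$ destroys the triangle but leaves the path running through the remaining triangle vertex and out along the pendant path, namely $u_2 u_3 u_4 \cdots u_n$, a copy of $P_{n-1}$. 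Deleting $N[u_1]=\{u_1,u_2,u_3\}$ removes the entire triangle and leaves the pendant path $u_4 u_5 \cdots u_n$, a copy of $P_{n-3}$. Hence
\[ i(D_n,x) = i(P_{n-1},x) + x\, i(P_{n-3},x) \]
as well, and comparing the two displays yields $i(C_n,x)=i(D_n,x)$, i.e. $C_n\sim D_n$.

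Since this reduces to a two-line computation once the recurrence is established, there is no serious obstacle; the only points requiring care are the degenerate small cases where the reductions produce trivial paths. For $n=3$ the pendant path is empty and $D_3=C_3=K_3$, yet the recurrence already invokes $P_0$ and $P_1$, so I would fix the conventions $i(P_0,x)=1$ and $i(P_1,x)=1+x$ and verify directly that the identity holds at $n=3$ and $n=4$. (Alternatively one could substitute Arocha's explicit formula from Theorem~\ref{thm:PathPoly} into both sides, but the deletion recurrence makes the coincidence transparent and avoids any binomial manipulation.)
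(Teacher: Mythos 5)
Your proof is correct. Note, however, that the paper does not actually prove Proposition~\ref{prop:chismcycle} at all: it is imported from the literature (Chism, Zhang) and stated without argument, so there is no internal proof to compare against. What you have done is supply the missing elementary verification, and your route is exactly the one the paper's own machinery suggests: the deletion recurrence you establish is recorded in the paper as Proposition~\ref{prop:deletion}, and the identity $i(C_n,x)=i(P_{n-1},x)+x\,i(P_{n-3},x)$ that you derive from it is precisely the computation the authors perform later in the proof of Proposition~\ref{prop:primecycles}. Your handling of $D_n$ is also right: deleting a degree-two triangle vertex (either $u_1$ or $u_2$ in Figure~\ref{fig:Ln}; by symmetry it does not matter which) leaves $P_{n-1}$, and deleting its closed neighbourhood $\{u_1,u_2,u_3\}$ strips off the whole triangle and leaves $P_{n-3}$, so both graphs satisfy the same recurrence in path polynomials. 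The attention to the degenerate cases $n=3,4$ with the conventions $i(P_0,x)=1$ and $i(P_1,x)=1+x$ is appropriate (and consistent with Theorem~\ref{thm:PathPoly}), though in fact the recurrence argument goes through uniformly for all $n\ge 3$ under those conventions, so the separate verification is a safety check rather than a logical necessity.
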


\begin{prop}\label{prop:largeevenpaths}
For any $K \geq 0$, there is an even path whose independence equivalence class has cardinality at least $K$.
\end{prop}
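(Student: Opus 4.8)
The plan is to exhibit, for a suitable even $N$, at least $2^{t}$ pairwise non-isomorphic graphs in $[P_{2N}]$, where $t$ is chosen so that $2^{t}\ge K$. The mechanism is to iterate Proposition~\ref{prop:chismpath} in order to ``peel off'' $t$ vertex-disjoint cycles of distinct lengths from a single even path, and then to flip each peeled cycle \emph{independently} to its triangle-with-pendant-path partner $D_m$ using Proposition~\ref{prop:chismcycle}. Throughout I would rely on the standard multiplicativity of the independence polynomial over disjoint unions, $i(G\cup H,x)=i(G,x)\,i(H,x)$, which guarantees that swapping a component for an independence-equivalent one preserves independence equivalence of the whole graph.

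For the peeling, I would define a sequence by $b_t=2$ and $b_{j-1}=2b_j+1$, so that $b_j=3\cdot 2^{\,t-j}-1$; in particular $b_0,\dots,b_{t-1}$ are all odd. Applying Proposition~\ref{prop:chismpath} with $n=b_j$ gives $P_{2b_j}\sim P_{b_j-1}\cup C_{b_j+1}$, and since $b_j$ is odd, $b_j-1=2b_{j+1}$ is even, so the leftover $P_{b_j-1}=P_{2b_{j+1}}$ is again an even path and the relation applies once more. Iterating $t$ times starting from $N=b_0=3\cdot 2^{t}-1$ and using multiplicativity at each step yields
$$P_{2b_0}\ \sim\ P_{4}\ \cup\ \bigcup_{j=0}^{t-1} C_{\,b_j+1},$$
where the peeled cycles have the distinct even orders $b_j+1=3\cdot 2^{\,t-j}\ge 6$.

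Next, for each subset $S\subseteq\{0,\dots,t-1\}$ I would replace exactly the cycles indexed by $S$ with their $D$-counterparts, setting
$$G_S\ =\ P_4\ \cup\ \bigcup_{j\in S} D_{\,b_j+1}\ \cup\ \bigcup_{j\notin S} C_{\,b_j+1}.$$
By Proposition~\ref{prop:chismcycle} each component satisfies $C_{b_j+1}\sim D_{b_j+1}$, so multiplicativity gives $G_S\sim P_{2b_0}$ for every $S$; thus all $2^{t}$ graphs $G_S$ lie in $[P_{2b_0}]$.

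The crux, and the step I expect to be the main obstacle, is verifying that these $2^{t}$ graphs are pairwise non-isomorphic, so that $|[P_{2b_0}]|\ge 2^{t}\ge K$. I would argue via the multiset of connected components: given $S\ne S'$, pick an index $j$ lying in exactly one of them, so that one of $G_S,G_{S'}$ has the component $D_{b_j+1}$ where the other has $C_{b_j+1}$. Since the orders $b_j+1$ are distinct, the order-$(b_j+1)$ component is uniquely located, and since each $b_j+1\ge 6$ we have $D_{b_j+1}\not\cong C_{b_j+1}$ (the cycle is $2$-regular, while $D_{b_j+1}$ has a vertex of degree $1$). Hence the component multisets differ and $G_S\not\cong G_{S'}$. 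The only genuine care needed is this non-isomorphism bookkeeping — ensuring the cycle lengths are distinct and large enough that no $D_m$ coincides with a cycle or with the fixed $P_4$ component — which the geometric choice $b_j+1=3\cdot 2^{\,t-j}$ arranges automatically. Taking $t=\lceil\log_2 K\rceil$ then completes the argument.
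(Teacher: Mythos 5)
Your proof is correct and takes essentially the same route as the paper: both iterate Proposition~\ref{prop:chismpath} to peel a geometric sequence of even cycles off a suitably chosen even path (the paper starts at $P_{2^{\lceil N/2\rceil+2}-2}$, you at $P_{6\cdot 2^{t}-2}$), and then use Proposition~\ref{prop:chismcycle} with multiplicativity to flip each peeled cycle independently to its $D$-partner. The only differences are bookkeeping: the paper also counts the partially peeled decompositions to reach $n/2$ graphs, while you count only the $2^{t}$ fully peeled configurations and spell out the component-multiset non-isomorphism argument more explicitly --- both suffice.
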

\begin{proof}
Let $N$ be a positive integer, and set $n=2^{\lceil N/2\rceil+2}-2$. We claim that $P_{n}$ has at least $\frac{n}{2}$ non-isomorphic graphs in its independence equivalence class. 
From Proposition~\ref{prop:chismpath}, $P_{2^{\lceil N/2\rceil+2-k}-2}$ is equivalent to $P_{2^{\lceil N/2\rceil+1-k}-2}\cup C_{2^{\lceil N/2\rceil+1-k}}$ for all $k=0,1,\ldots,\lceil N/2\rceil-1$. Therefore, by iteratively applying

\begin{align}
P_{n}\sim  P_{2^{\lceil N/2\rceil+1-k}-2}\cup\bigcup_{\ell=0}^{k} C_{2^{\lceil N/2\rceil+1-\ell}}\label{eq:patheqclass}.
\end{align}

By Proposition~\ref{prop:chismcycle}, for $0\le \ell\le k$, $C_{2^{\lceil N/2\rceil+1-\ell}}\sim D_{2^{\lceil N/2\rceil+1-\ell}}$. Therefore, for each value of $k$, the cycles in (\ref{eq:patheqclass}) can be replaced by equivalent graphs in $2^{k+1}$ ways. This, together with the graph $P_n$, gives $1+2+2^2+\cdots 2^{\lceil N/2\rceil}=2^{\lceil N/2\rceil+1}-1=\frac{n}{2}$ many distinct graphs in $[P_n]$. 
\end{proof}

The surprising difference between the disconnected and connected graphs that are independence equivalent to even paths begs the question of what happens with odd paths. In the odd case, we completely characterize $[P_{2n+1}]$ for all $n$ by showing, in stark contrast to Proposition~\ref{prop:largeevenpaths}, that $P_{2n+1}$ is independence unique for all $n\ge 0$.

\begin{theorem}\label{thm:oddpathsunique}
$P_{2n+1}$ is independence unique for all $n\ge 0$.
\end{theorem}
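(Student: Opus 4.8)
The plan is to reduce everything to the connected case and then exploit the single feature that separates odd paths from even ones: $P_{2n+1}$ has a \emph{unique} maximum independent set. Suppose $i(G,x)=i(P_{2n+1},x)$. Reading off the first coefficients from Theorem~\ref{thm:PathPoly}, the values $i_1=2n+1$ and $i_2=\binom{2n}{2}$ tell me that $G$ has $2n+1$ vertices and, since $i_2=\binom{2n+1}{2}-|E(G)|$, exactly $|E(G)|=2n$ edges. By Theorem~\ref{cor:connectedpathunique} it suffices to prove that $G$ is connected, so I assume $G$ has $c\ge 2$ components and drive for a contradiction. Because $|E(G)|=|V(G)|-1$ while $G$ is disconnected, the cyclomatic number $|E(G)|-|V(G)|+c=c-1\ge 1$ is positive, so $G$ must contain at least one cycle.

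Next I would pin down the degree sequence using $i_3$. Matching $i_3(G)$ to Arocha's value and using the identity $\sum_v(d_v-2)=2|E(G)|-2|V(G)|=-2$, a short computation converts $i_3(G)=i_3(P_{2n+1})$ into $\sum_v (d_v-1)(d_v-2)=2t(G)$, where $t(G)$ is the number of triangles of $G$. Each summand is a nonnegative integer vanishing exactly when $d_v\in\{1,2\}$, so if $G$ is triangle-free then every vertex has degree $1$ or $2$ and there are no isolated vertices; together with $\sum_v(d_v-2)=-2$ this forces exactly two vertices of degree $1$ and all the rest of degree $2$. A graph of maximum degree $2$ is a disjoint union of paths and cycles, and having precisely two vertices of degree $1$ means exactly one path component, so $G=P_a\cup C_{b_1}\cup\cdots\cup C_{b_r}$ with $r\ge 1$ (using that $G$ is disconnected).

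The finish is then immediate, and it is exactly here that the parity of $n$ enters. Writing $\mu(H)$ for the number of maximum independent sets of $H$, additivity of $\alpha$ over components gives $1=i_{n+1}(P_{2n+1})=i_\alpha(G)=\mu(P_a)\prod_{i=1}^{r}\mu(C_{b_i})$. Every cycle has at least two maximum independent sets (two for an even cycle, and $b$ for $C_b$ with $b$ odd), so the right-hand side is at least $2^r\ge 2$, a contradiction. Hence $G$ is connected and $G\cong P_{2n+1}$. Note that this argument collapses for even paths precisely because $i_\alpha(P_{2n})=n+1>1$ leaves room for the cycle factors, which is why $[P_{2n}]$ can instead be arbitrarily large (Proposition~\ref{prop:largeevenpaths}).

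I expect the genuine obstacle to be the triangle case $t(G)\ge 1$, which the $i_3$ identity alone does not exclude, since triangles are balanced there against isolated vertices and vertices of degree $\ge 3$. I would clear this with one further coefficient: analysing $i_4$ produces a second linear relation among the relevant small-subgraph counts which, combined with $\sum_v(d_v-1)(d_v-2)=2t(G)$, I expect to be incompatible with $t(G)\ge 1$. As partial evidence that triangles cannot intrude, a triangle can never be an entire component, because $i(K_3,x)=1+3x$ would force $-\tfrac13$ to be a root of $i(P_{2n+1},x)$, whereas its roots are $-1/\bigl(4\cos^2(j\pi/(2n+3))\bigr)$ and $6j=2n+3$ has no integer solution (even $=$ odd). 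Making this triangle-elimination step airtight across all remaining configurations is the portion of the argument I would budget the most care for.
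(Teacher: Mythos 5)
Your proposal has a genuine gap, and you have located it yourself: the case $t(G)\ge 1$ is never excluded. Everything before that point is correct --- the vertex and edge counts, the identity $\sum_{v}(d_v-1)(d_v-2)=2t(G)$ (which does follow from Theorem~\ref{thm_i3} together with $\sum_v d_v=4n$), and the finish in the triangle-free case, where the factorization $1=i_{n+1}(G)=\mu(P_a)\prod_{i=1}^{r}\mu(C_{b_i})\ge 2^{r}$ contradicts monicity. But your route for killing triangles, a second linear relation extracted from $i_4$, is speculative: you neither derive that relation nor show it conflicts with $t(G)\ge 1$, and the inclusion-exclusion expression for $i_4$ involves counts of several further induced subgraphs ($P_4$, $C_4$, paws, $K_4$, \dots), not just degrees and triangles, so there is no reason to expect a relation of the clean form you are hoping for. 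The root argument disposes only of $K_3$ as an entire component, which is far from all configurations (e.g.\ a triangle with pendant trees attached, sitting inside a larger component). As written, the proof is therefore incomplete.

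The paper closes exactly this hole with a cheaper observation made \emph{before} any component or degree analysis: since $i(G,x)$ is monic, $G$ has a unique maximum independent set $S$, of size $n+1$; if some vertex of $V(G)-S$ had at most one neighbour in $S$, swapping it in would produce a second independent set of size $n+1$. Hence each of the $n$ vertices outside $S$ has at least two neighbours in $S$, which already accounts for all $2n$ edges of $G$; so every edge joins $S$ to $V(G)-S$, and $G$ is bipartite --- in particular triangle-free --- with no case analysis at all. After that, the paper counts component sizes ($|S_i|\ge |D_i|+1$ on each component, summing to $2n+k\ge 2n+2$ vertices) where you instead classify $G$ as $P_a\cup C_{b_1}\cup\cdots\cup C_{b_r}$ and count maximum independent sets; either finish is fine. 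The practical repair for your write-up is to graft in the paper's bipartiteness observation in place of the projected $i_4$ analysis: with triangles eliminated that way, your degree-sequence argument and the product bound $\mu(P_a)\prod_i\mu(C_{b_i})\ge 2$ complete a correct proof, differing from the paper only in the final counting step.
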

\begin{proof}
Suppose that there exists a graph $G$ such that $G\sim P_{2n+1}$. Note that $i(P_{2n+1},x)$ is monic for every $n\ge 0$, since there is exactly one independent set of maximum size, $n+1$, by taking a leaf and then every other vertex along the path. So $i(G,x)$ must be monic. Therefore, $G$ must have exactly one independent set of size $n+1$; call this set $S$. If there is a vertex in $V(G)-S$ that is not adjacent to at least two vertices in $S$, then we can take this vertex and $n$ vertices in $S$ that are not adjacent with it to make a second independent set of size $n+1$, a contradiction. Therefore every vertex in $V(G)-S$ is adjacent to at least $2$ vertices in $S$, requiring at least $2n$ edges between $V(G)-S$ and $S$. From the second coefficient of $i(P_{2n+1},x)$, we know that $G$ has exactly $2n$ edges and therefore $G$ is a bipartite graph with bipartition $(V(G)-S,S)$. Therefore, $G$ is triangle-free.

\begin{figure}[!h]
\centering
\scalebox{1}{
\begin{tikzpicture}
\draw (0,0.9) ellipse (2 and 0.75);
\node[text width=1cm] at (0,1) {\Large $S$};

\node[shape=circle,draw=black,fill=black] (1) at (-1.4,-0.75) {};
\node[shape=circle,draw=black,fill=black] (2) at (-0.7,-0.75) {};
\node[shape=circle,draw=black,fill=black] (3) at (0,-0.75) {};
\node[shape=circle,draw=black,fill=black] (4) at (1.4,-0.75) {};

\begin{scope}

    \path [-] (1) edge node {} (-1.5,0.5);
    \path [-] (1) edge node {} (-0.9,0.5);
    
    \path [-] (2) edge node {} (-0.8,0.5);
    \path [-] (2) edge node {} (-0.2,0.5);
    
    \path [-] (3) edge node {} (-0.1,0.5);
    \path [-] (3) edge node {} (0.5,0.5);
    
    \path [-] (4) edge node {} (0.9,0.5);
    \path [-] (4) edge node {} (1.5,0.5);
    
    \path (3) -- node[auto=false]{\ldots} (4);

\end{scope}

\draw[black, decoration={brace, raise=5pt, mirror, amplitude=3mm}, decorate] (-1.5,-1) -- (1.5,-1);

\node[text width=1cm] at (0.4,-1.75) {$n$};
\end{tikzpicture}}

\caption{$G$}%
\label{fig:G}%
\end{figure}

If $G\not\cong P_{2n+1}$, then from Corollary~\ref{cor:connectedpathunique} we know that $G$ must be disconnected. Let $G_1,G_2,\ldots,G_k$ be the connected components of $G$ for some $k\ge 2$. Let $S_i=S\cap V(G_i)$ and $D_i=V(G_i)-S_i$ for $i=1,2,\ldots,k$. Each $G_i$ is bipartite with bipartition $(S_i,D_i)$. Suppose that for some $i$, $|S_i|\le |D_i|$. Now, $\bigcup_{j\neq i}S_j\cup D_i$ is an independent set with at least $n+1$ vertices in it, which contradicts $i(G,x)$ being monic and of degree $n+1$. Therefore, $|S_i|\ge |D_i|+1$ for $i=1,2,\ldots,k$. Therefore, 
$$2n+1=|V(G)|=\sum_{i=1}^k|V(G_i)|=\sum_{i=1}^k\left(|S_i|+|D_i|\right)\ge \sum_{i=1}^k\left( 2|D_i|+1 \right) =2n+k\ge 2n+2,$$
a contradiction. Therefore, $G$ must be connected, and by Corollary~\ref{cor:connectedpathunique}, $G\cong P_{2n+1}$. Therefore $P_{2n+1}$ is independence unique.
\end{proof}

It is interesting to note the contrast between the independence equivalence classes of even and odd paths respectively given by Proposition~\ref{prop:largeevenpaths} and Theorem~\ref{thm:oddpathsunique}. It seems that the key distinction between the independence equivalence class of odd and even paths is the number of independent sets of maximum size. An even path on $n$ vertices has $\frac{n}{2}+1$ maximum independent sets, while an odd path has only one. As seen in the proof of Theorem \ref{thm:oddpathsunique}, a graph having few maximum independent sets determines some structure. We will use a similar approach in the next section for even cycles.

\section{Independence Equivalence Class of Cycles $C_{n}$}
\label{sec:cycles}
An early result in chromaticity is that cycles are chromatically unique \cite{ChaoWhitehead1978}. Clearly this is not the case for independence polynomials as Proposition \ref{prop:chismcycle} shows $C_n \sim D_n$ for $n \geq 3$. In this section, we will show that $[C_n]=\{C_n,D_n\}$ for $n$ even, or $n$ a prime at least $5$ to any power. Along with these results, we have used the computational tools of nauty \cite{McKay2014} and Maple to show that $[C_n]=\{C_n,D_n\}$ for $1\le n\le 32$ with the exceptions of $C_6,C_9,$ and $C_{15}$.  We will present the independence equivalence classes of each of these three exceptional graphs as we proceed. 

Like paths, all connected graphs which are independence equivalent to cycles have been determined.

\begin{theorem}[\cite{Oboudi2018}]\label{thm:cycleconnectedequivclass}
For $n\ge 3$, if $G$ is a connected graph such that $i(G,x)=i(C_n,x)$, then $G\cong C_n$ or $G\cong D_n$.
\QED
\end{theorem}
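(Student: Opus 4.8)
The plan is to prove the statement using only the three lowest non-constant coefficients of $i(G,x)$, matching them against those of $i(C_n,x)$; the remainder of the polynomial turns out to be unnecessary. First I would read structural data off the low-order coefficients. Since $i_1=n$ counts vertices and $i_2=\binom n2-|E(G)|$, the equation $i_2(G)=i_2(C_n)=\binom n2-n$ forces $|E(G)|=n$. A connected graph on $n$ vertices with $n$ edges has cyclomatic number $1$, hence exactly one cycle, so $G$ is unicyclic; in particular it contains at most one triangle, and contains one precisely when its girth is $3$. Write $t\in\{0,1\}$ for the number of triangles of $G$.

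The crucial step is to recover the degree sequence from $i_3$. A short inclusion--exclusion over the edges shows that for every graph
\[
i_3=\binom n3-|E(G)|(n-2)+\sum_v\binom{d_v}{2}-t,
\]
where $d_v$ is the degree of $v$ and $t$ is the number of triangles. Applying this to $G$ and to $C_n$ (all of whose degrees equal $2$, and with $t=0$ once $n\ge 4$), substituting $|E(G)|=n$, and equating $i_3(G)=i_3(C_n)$ reduces everything to $\sum_v\binom{d_v}{2}=n+t$. Since $\sum_v d_v=2|E(G)|=2n$, this is equivalent to the single identity
\[
\sum_v (d_v-2)^2 = 2t.
\]
If $t=0$ the left-hand side vanishes, so $G$ is $2$-regular and, being connected, equals $C_n$. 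If $t=1$ then the integer deviations $d_v-2$ sum to $0$ and have squares summing to $2$, so exactly one equals $+1$ and one equals $-1$: thus $G$ has one vertex of degree $3$, one leaf, and all other vertices of degree $2$.

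It then remains to show that this degree sequence, together with connectedness, unicyclicity and girth $3$, determines $G\cong D_n$. I would argue that the unique degree-$3$ vertex must lie on the triangle, since otherwise all three triangle vertices would have degree $2$ and the triangle would be a connected component by itself, contradicting connectedness for $n>3$; thus one triangle vertex carries the whole remainder of the graph through a single extra incident edge, and because no other vertex has degree above $2$ the attached subtree cannot branch and must be a path, which is exactly $D_n$. I expect this final realizability argument --- passing from the forced degree sequence to the unique graph --- to be the only genuinely structural (as opposed to arithmetic) point, and hence the main obstacle; the degenerate case $n=3$ is disposed of at once, since there $C_3=D_3$ is the only connected graph on three vertices with three edges.
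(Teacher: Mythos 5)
Your proof is correct, but the most important thing to say is that the paper has no proof of this statement to compare against: Theorem~\ref{thm:cycleconnectedequivclass} is imported from Oboudi \cite{Oboudi2018} and stated without proof, then used as a black box (e.g.\ at the end of the proof of Theorem~\ref{thm:powerprimecycles}) to reduce the determination of $[C_n]$ to non-connected candidates. So what you have written is an independent proof, and, interestingly, one assembled entirely from the paper's own toolkit: your inclusion--exclusion formula for $i_3$ is exactly Theorem~\ref{thm_i3}, and your identity $\sum_v (d_v-2)^2=2t$ is, once $\sum_v d_v=2n$ is used, the same relation $n(C_3)=g_0+\sum_{i\ge 3}\left(\binom{i}{2}-i+1\right)g_i$ that the paper extracts from Lemma~\ref{lem:eq3}. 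The paper deploys these tools only for its new results on $[C_{2n}]$ and $[C_{p^k}]$, where the harder part is controlling graphs not assumed connected; you exploit them under the connectedness hypothesis, where $n$ vertices and $n$ edges give unicyclicity, so the triangle count $t$ lies in $\{0,1\}$ and the degree sequence collapses either to all twos (forcing $C_n$) or to $(3,2,\dots,2,1)$ with a triangle (forcing $D_n$ via your pendant-path argument, which parallels the paper's own reconstruction of $D_r$ in Case~1 of Theorem~\ref{thm:evencycle}). Your route buys two things: it proves the stronger statement that a connected graph need only agree with $i(C_n,x)$ in the coefficients of $x$, $x^2$ and $x^3$ to be $C_n$ or $D_n$, and it would make the paper self-contained by removing the dependence on \cite{Oboudi2018}. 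All the individual steps check out, including the equivalence $\sum_v\binom{d_v}{2}=n+t \iff \sum_v(d_v-2)^2=2t$, the placement of the degree-$3$ vertex on the triangle, and the degenerate case $n=3$.
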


Given Theorem \ref{thm:cycleconnectedequivclass}, we need only consider disconnected graphs to determine $[C_n]$. We will use an argument on the degree sequence to show that there are no disconnected graphs in $[C_{2n}]$ for $n \geq 2$, and one disconnected graph in $[C_6]$. As is shown in the next theorem, using the principle of inclusion-exclusion, some information about the degree sequence of a graph is encoded in the coefficient of $x^3$ in its independence polynomial.

\begin{theorem}
\label{thm_i3}

 For any graph $G=(V,E)$ with $n$ vertices and $m$ edges
 
 $$i_3(G) = \binom{n}{3} - m(n-2) + \sum_{v \in V} \binom{\deg (v)}{2}-n(C_3),$$
 
 \noindent where $i_3(G)$ is the number of independent sets in $G$ with cardinality three and $n(C_3)$ is the number of $3$-cycles in $G$.
\end{theorem}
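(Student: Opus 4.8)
I would prove this by counting the number of independent sets of size three directly, starting from the total number of 3-subsets of vertices and subtracting off those triples that fail to be independent. The key observation is that a 3-subset $\{u,v,w\}$ is independent precisely when it contains no edge, so I would classify 3-subsets according to how many of the three possible pairs among them are edges. Let me set $T=\binom{n}{3}$, the total number of 3-subsets. A non-independent triple contains either exactly one edge, exactly two edges, or exactly three edges (a triangle).

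The main computation is to express each of these counts. First I would count, with multiplicity, pairs consisting of an edge together with a third vertex: there are $m(n-2)$ such configurations, since each of the $m$ edges can be completed by any of the remaining $n-2$ vertices. This quantity overcounts triples with more than one edge, so the plan is to understand exactly how each type of triple is counted by $m(n-2)$. A triple with exactly one edge is counted once; a triple with exactly two edges (a path $P_3$ as an induced configuration on those three vertices) is counted twice; and a triangle is counted three times. Writing $e_1, e_2, e_3$ for the number of triples with exactly one, two, and three edges respectively, I get $m(n-2) = e_1 + 2e_2 + 3e_3$, so the number of non-independent triples is $e_1+e_2+e_3 = m(n-2) - e_2 - 2e_3$, and hence
\[
i_3(G) = \binom{n}{3} - m(n-2) + e_2 + 2e_3.
\]

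The remaining step is to identify $e_2$ and $e_3$. The number $e_2$ of triples inducing exactly two edges counts vertex-centred paths of length two (a vertex $v$ together with two distinct neighbours), minus those that close into a triangle; the raw count of two-edge configurations centred at a vertex $v$ is $\binom{\deg(v)}{2}$, giving $\sum_{v\in V}\binom{\deg(v)}{2}$ in total. Each triangle contributes three such cherries (one centred at each of its vertices), all of which are triangles rather than two-edge triples, so $\sum_{v}\binom{\deg(v)}{2} = e_2 + 3e_3$, whence $e_2 = \sum_{v}\binom{\deg(v)}{2} - 3e_3$. Substituting, with $e_3 = n(C_3)$, gives
\[
i_3(G) = \binom{n}{3} - m(n-2) + \sum_{v\in V}\binom{\deg(v)}{2} - 3\,n(C_3) + 2\,n(C_3) = \binom{n}{3} - m(n-2) + \sum_{v\in V}\binom{\deg(v)}{2} - n(C_3),
\]
as claimed.

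The only real obstacle is bookkeeping the overcounts correctly: I must be careful to track how many times each of the three types of non-independent triple is counted by $m(n-2)$ and by $\sum_v\binom{\deg(v)}{2}$, since an off-by-one in either multiplicity throws off the final coefficient of $n(C_3)$. Verifying the two multiplicity identities $m(n-2)=e_1+2e_2+3e_3$ and $\sum_v\binom{\deg(v)}{2}=e_2+3e_3$ is the crux; everything else is immediate substitution.
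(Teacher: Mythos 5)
Your proposal is correct and follows essentially the same argument as the paper: both count non-independent triples by classifying them according to their number of edges, using the identities $m(n-2)=e_1+2e_2+3e_3$ and $\sum_{v}\binom{\deg(v)}{2}=e_2+3e_3$ (the paper states these multiplicities verbally rather than with named variables). The bookkeeping and final substitution match the paper's proof exactly.
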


\begin{proof}

It is sufficient to show the number of 3-subsets which are not independent is $m(n-2) - \sum_{v \in V} {\deg (v) \choose 2}+n(C_3)$. Any  3-subset of $V$ induces one of the following subgraphs:

\begin{figure}[!h]
\def\c{0.5}
\centering
\subfigure[]{
\scalebox{\c}{
\begin{tikzpicture}
\node[shape=circle,draw=black,fill=black] (1) at (-1,0) {};
\node[shape=circle,draw=black,fill=black] (2) at (0,2) {};
\node[shape=circle,draw=black,fill=black] (3) at (1,0) {};

\end{tikzpicture}}}
\qquad
\subfigure[]{
\scalebox{\c}{
\begin{tikzpicture}
\node[shape=circle,draw=black,fill=black] (1) at (-1,0) {};
\node[shape=circle,draw=black,fill=black] (2) at (0,2) {};
\node[shape=circle,draw=black,fill=black] (3) at (1,0) {};

\path[] (1) edge node {} (2);
\end{tikzpicture}}}
\qquad
\subfigure[]{
\scalebox{\c}{
\begin{tikzpicture}
\node[shape=circle,draw=black,fill=black] (1) at (-1,0) {};
\node[shape=circle,draw=black,fill=black] (2) at (0,2) {};
\node[shape=circle,draw=black,fill=black] (3) at (1,0) {};

\path[] (1) edge node {} (2);
\path[] (2) edge node {} (3);

\end{tikzpicture}}}
\qquad
\subfigure[]{
\scalebox{\c}{
\begin{tikzpicture}
\node[shape=circle,draw=black,fill=black] (1) at (-1,0) {};
\node[shape=circle,draw=black,fill=black] (2) at (0,2) {};
\node[shape=circle,draw=black,fill=black] (3) at (1,0) {};

\path[] (1) edge node {} (2);
\path[] (2) edge node {} (3);
\path[] (1) edge node {} (3);

\end{tikzpicture}}}
\end{figure}

We can construct each non-independent 3-subset by taking an edge $uv$ and a vertex $w$ not incident to the edge. As $G$ has $m$ edges there we will construct $m(n-2)$ subsets. If $w$ is not adjacent to $u$ nor $v$ then we induce the subgraph $(b)$ and construct it once. If $w$ is adjacent to $u$ (or $v$) then we induce the subgraph $(c)$. However this 3-subset will have been constructed in two ways: The edge $uv$ and vertex $w$, and the edge $uw$ (or $vw$) and vertex $v$. Therefore we have counted each 3-subset which induces a subgraph of type $(c)$ twice and $(d)$ three times. 

We can construct each 3-subset which induces a subgraph of type $(c)$ by taking a vertex and choosing any two of its neighbours. Hence there are $\sum_{v \in V} {\deg (v) \choose 2}$ such subsets. Note this counts the number of 3-subsets which induces subgraph $(d)$ three times as well. Clearly the number of 3-subsets which induces subgraph $(d)$ is $n(C_3)$. Thus the number of non-independent 3-subsets is $m(n-2) - \sum_{v \in V} {\deg (v) \choose 2}+n(C_3)$.

\end{proof}

\begin{lemma}
\label{lem:eq3}
Let $n \geq 4$ and $G$ be a graph with $n(C_3)$ many $3$-cycles and $g_i$ many vertices of degree $i$. If $G \sim C_n$ then

\begin{enumerate}[(i)]
\item $\sum\limits_{i=0}^{n-1} g_i = n$,
\item $\sum\limits_{i=1}^{n-1} i \cdot g_i = 2n$,
\item $\sum\limits_{i=2}^{n-1} {i \choose 2}g_i =n + n(C_3)$, and
\item $n(C_3) \geq g_0 + \sum\limits_{i=3}^{n-1} g_i$, that is there are at most $n(C_3)$ vertices not of degree one or two.
\end{enumerate}
\end{lemma}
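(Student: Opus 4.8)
The plan is to establish the four identities by computing the independence polynomial of $C_n$ explicitly (or at least its low-order coefficients) and then matching coefficients against the general graph $G$. Since $G \sim C_n$ means $i(G,x) = i(C_n,x)$, the coefficients $i_0, i_1, i_2, i_3$ must agree. First I would record that $i_0 = 1$ always, that $i_1(C_n) = n$ forces $G$ to have $n$ vertices, which gives (i) immediately since $\sum_i g_i$ simply counts all vertices. Next, $i_2(C_n) = \binom{n}{2} - n$ (there are $\binom{n}{2}$ pairs minus the $n$ edges of the cycle), and in general $i_2(G) = \binom{n}{2} - m$; equating these forces $m = n$, the number of edges of $G$. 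The handshake identity $\sum_i i\cdot g_i = 2m = 2n$ then gives (ii).

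For part (iii), I would invoke Theorem~\ref{thm_i3}, which expresses $i_3(G)$ in terms of $n$, $m$, the degree sequence, and $n(C_3)$. Substituting $m = n$ gives
$$i_3(G) = \binom{n}{3} - n(n-2) + \sum_{v}\binom{\deg(v)}{2} - n(C_3).$$
On the other side, I need $i_3(C_n)$. The cleanest route is to read it off Arocha's formula (Theorem~\ref{thm:PathPoly}) together with the standard relation between cycle and path independence polynomials, or simply to compute directly: the number of independent triples in $C_n$ is $\frac{n}{3}\binom{n-4}{2}$-type expression, but rather than chase the closed form I would just equate the two expressions for $i_3$ and solve for $\sum_v \binom{\deg(v)}{2}$. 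Rewriting $\sum_v \binom{\deg(v)}{2}$ as $\sum_{i\ge 2}\binom{i}{2}g_i$ (the $i=0,1$ terms vanish), the identity should collapse to $\sum_{i\ge 2}\binom{i}{2}g_i = n + n(C_3)$ after the binomial arithmetic cancels. The main bookkeeping obstacle here is getting the constant $i_3(C_n)$ exactly right so that the $\binom{n}{3}$ and $n(n-2)$ terms cancel to leave precisely $n + n(C_3)$; this is a routine but error-prone computation.

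For part (iv), I would combine (ii) and (iii) to extract an inequality. The idea is that each vertex of degree $i$ contributes $i$ to the sum in (ii) and $\binom{i}{2}$ to the sum in (iii). I would take a suitable linear combination of (ii) and (iii) designed so that vertices of degree $1$ and $2$ contribute nothing (or cancel), while vertices of degree $0$ or degree $\ge 3$ contribute a nonnegative amount. Concretely, consider $\sum_i \binom{i}{2} g_i - \sum_i i\cdot g_i = (n + n(C_3)) - 2n = n(C_3) - n$, so $\sum_i \left(\binom{i}{2} - i\right) g_i = n(C_3) - n$. Since $\binom{i}{2} - i = \frac{i(i-3)}{2}$, this vanishes at $i=0$ and $i=3$, equals $-1$ at $i=1$, $-1$ at $i=2$, and is strictly positive for $i \ge 4$. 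Using (i) to substitute $\sum g_i = n$ and isolating the degree-$0$ and degree-$\ge 3$ terms, the inequality $n(C_3) \ge g_0 + \sum_{i\ge 3} g_i$ should follow once I verify the sign of each coefficient; the slight subtlety is handling $i=3$ (coefficient zero, so those vertices must be accounted for separately via (i)) and confirming no coefficient spoils the direction of the inequality.

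The step I expect to be the genuine obstacle is fixing the exact value of $i_3(C_n)$ and confirming that the algebra in (iii) produces $n + n(C_3)$ rather than an off-by-constant expression; everything else is a clean consequence of coefficient matching and an elementary degree-weight inequality.
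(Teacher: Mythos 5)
Your proposal is correct and follows essentially the same route as the paper: match coefficients $i_1,i_2$ to get $n$ vertices and $n$ edges (hence (i) and (ii)), use Theorem~\ref{thm_i3} for (iii), and then take the linear combination (i) $+$ (iii) $-$ (ii) for (iv) --- your version, namely (iii) $-$ (ii) followed by substituting (i) to absorb the degree-$0$ and degree-$3$ terms, is exactly the same combination, and your sign analysis of $\binom{i}{2}-i=\tfrac{i(i-3)}{2}$ is sound. The one step you flagged as the genuine obstacle, pinning down $i_3(C_n)$, is not an obstacle at all: apply Theorem~\ref{thm_i3} to $C_n$ itself, which for $n\ge 4$ is $2$-regular, has $n$ vertices and $n$ edges, and is triangle-free, so $i_3(C_n)=\binom{n}{3}-n(n-2)+n$, and the $\binom{n}{3}$ and $n(n-2)$ terms cancel against the corresponding terms for $G$ exactly as you predicted, leaving $\sum_{i\ge 2}\binom{i}{2}g_i = n + n(C_3)$.
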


\begin{proof}
Suppose $G$ is a graph such that $G \sim C_n$. Then $G$ has $n$ vertices and $n$ edges making (i) and (ii) trivial. To prove (iii), we note that by Theorem \ref{thm_i3}, 

$$i_3(G) = {n \choose 3} - n(n-2) + \sum_{i=2}^{n-1} {i \choose 2}g_i-n(C_3)$$.

Furthermore $i_3(C_{n})$ can easily be computed to be ${n \choose 3} - n(n-2) + n$. As $i_3(G)=i_3(C_{n})$ it follows that (iii) holds. Finally by adding (i) and (iii) and subtracting (ii) we obtain:

$$n(C_3)=\sum_{i=0}^{n-1} g_i+ \sum_{i=2}^{n-1} {i \choose 2}g_i - \sum_{i=1}^{n-1} i \cdot g_i = g_0 + \sum_{i=3}^{n-1} \left({i \choose 2} -i+1 \right) g_i \geq g_0+\sum_{i=3}^{n-1} g_i.$$

Hence (iv) holds as well.
\end{proof}

We will require basic computational results on computing the independence polynomial due to Gutman and Harary.

\begin{prop}[\cite{INDFIRST}]\label{prop:deletion}
Let $G$ and $H$ be graphs and $v\in V(G)$. Then:
\begin{enumerate}[i)]
\item $i(G,x)=i(G-v,x)+xi(G-N[v],x)$.
\item $i(G\cup H,x)=i(G,x)\cdot i(H,x)$. \QED
\end{enumerate}
\end{prop}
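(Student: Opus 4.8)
The plan is to prove both identities by directly classifying the independent sets of the relevant graph and translating the resulting partition of the count into an identity of generating polynomials. Throughout I write $\mathcal{I}(G)$ for the collection of independent sets of $G$ and recall that $i(G,x)=\sum_{S\in\mathcal{I}(G)}x^{|S|}$, so that any size-preserving bijection between families of independent sets yields an equality of independence polynomials, and any partition of $\mathcal{I}(G)$ yields a corresponding sum of polynomials.

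For part (i), I would partition $\mathcal{I}(G)$ according to whether or not the fixed vertex $v$ belongs to the set. The independent sets of $G$ that avoid $v$ are precisely the independent sets of $G-v$, since deleting $v$ removes no edges among the remaining vertices; these contribute $i(G-v,x)$. If instead $S\in\mathcal{I}(G)$ contains $v$, then $S$ can contain no neighbour of $v$, so $S\setminus\{v\}$ is an independent set of $G-N[v]$; conversely, for any independent set $T$ of $G-N[v]$ the set $T\cup\{v\}$ is independent in $G$. This gives a bijection $S\mapsto S\setminus\{v\}$ between the independent sets of $G$ containing $v$ and $\mathcal{I}(G-N[v])$ that decreases cardinality by exactly one, so these sets contribute $x\cdot i(G-N[v],x)$. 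Summing the two cases gives the first identity.

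For part (ii), the key observation is that $G\cup H$ has no edges between $V(G)$ and $V(H)$, so a subset $S\subseteq V(G)\cup V(H)$ is independent in $G\cup H$ if and only if $S\cap V(G)$ is independent in $G$ and $S\cap V(H)$ is independent in $H$. The map $S\mapsto(S\cap V(G),\,S\cap V(H))$ is therefore a bijection $\mathcal{I}(G\cup H)\to\mathcal{I}(G)\times\mathcal{I}(H)$ satisfying $|S|=|S\cap V(G)|+|S\cap V(H)|$. Comparing coefficients of $x^k$, the number of independent sets of size $k$ in $G\cup H$ equals $\sum_{a+b=k} i_a(G)\,i_b(H)$, which is exactly the coefficient of $x^k$ in the product $i(G,x)\,i(H,x)$; hence $i(G\cup H,x)=i(G,x)\cdot i(H,x)$.

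Both arguments are elementary bijective counts, so I do not expect a genuine obstacle; the only point requiring care is to verify that the stated correspondences are size-preserving (for (ii)) or shift the cardinality by exactly the right amount, namely the single vertex $v$ (for (i)). It is precisely this bookkeeping that produces the factor of $x$ in the first identity and the convolution in the second, so the plan reduces to stating these two correspondences cleanly and checking the size behaviour in each.
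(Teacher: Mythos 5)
Your proof is correct: both the partition of independent sets according to whether they contain $v$ (yielding the factor $x$ from the size shift) and the bijection $S\mapsto(S\cap V(G),\,S\cap V(H))$ giving the coefficient convolution are sound and complete. Note that the paper itself offers no proof of this proposition --- it is quoted as a known result of Gutman and Harary --- and your argument is precisely the standard one from that source, so there is nothing to contrast.
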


\subsection{Even Cycles}

\begin{theorem}
\label{thm:evencycle} 

Let $K_4-e$ denote the graph which consists of a $K_4$ with one edge removed. Then

\begin{itemize}
\item $[C_{6}] = \{C_{2n},D_{2n},(K_4-e) \cup K_2\},$ and
\item $[C_{2n}] = \{C_{2n},D_{2n}\}$ for $n \geq 2,~n \neq 3$.

\end{itemize}

\end{theorem}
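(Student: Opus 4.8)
The plan is to use Theorem~\ref{thm:cycleconnectedequivclass} to reduce to disconnected graphs and then exploit the fact that $i(C_{2n},x)$ is \emph{not} monic. First I would record that $C_{2n}$ has exactly two maximum independent sets (the two alternating classes), so the leading coefficient of $i(C_{2n},x)$ equals $2$. Suppose $G\sim C_{2n}$ is disconnected with components $G_1,\dots,G_k$, $k\ge 2$. By Proposition~\ref{prop:deletion}(ii), $i(G,x)=\prod_j i(G_j,x)$, and since the leading coefficient of $i(G_j,x)$ is precisely the number of maximum independent sets of $G_j$, the product of these numbers is $2$. As each is a positive integer, exactly one component has two maximum independent sets while every other component has a \emph{unique} maximum independent set. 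This is the even-cycle analogue of the ``few maximum independent sets'' idea behind Theorem~\ref{thm:oddpathsunique}.

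Next I would set up the global bookkeeping. Writing $v_j,e_j,\alpha_j$ for the order, size, and independence number of $G_j$, the equivalence $G\sim C_{2n}$ gives $\sum_j v_j=\sum_j e_j=2n$ and $\sum_j\alpha_j=n$. Since each $G_j$ is connected, these yield two clean identities: the total cyclomatic number $\sum_j(e_j-v_j+1)=k$ equals the number of components, and $\sum_j(v_j-2\alpha_j)=0$. I would also invoke Lemma~\ref{lem:eq3}: its parts (i) and (ii) give $\sum_i(i-2)g_i=0$, and part (iv) bounds the number of vertices of degree different from $1$ or $2$ by the triangle count $n(C_3)$.

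A first dichotomy comes from whether $G$ has a triangle. If $n(C_3)=0$, then Lemma~\ref{lem:eq3}(iv) forces $g_0=0$ and $g_i=0$ for all $i\ge 3$, whence $\sum_i(i-2)g_i=-g_1=0$ gives $g_1=0$; thus every vertex has degree $2$ and $G$ is a disjoint union of cycles. But an odd cycle contributes at least $3$ and an even cycle exactly $2$ maximum independent sets, so the product being $2$ forces a single even cycle, i.e. $G=C_{2n}$ is connected, a contradiction. Hence every disconnected $G\sim C_{2n}$ contains a triangle, and I can work under $n(C_3)\ge 1$ together with the degree bound from (iv).

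The remaining and hardest step is to classify the admissible components once triangles are allowed. The key sub-claim is that a connected graph with at most two maximum independent sets that contains a triangle is severely restricted (the paw $D_4$ and $K_4-e$ are the relevant small examples, while dense graphs such as $K_5-e$ must be excluded using their size and large cyclomatic number). I would combine this classification with the three budgets above, namely $\sum_j(e_j-v_j+1)=k$, $\sum_j(v_j-2\alpha_j)=0$, and $n(C_3)\ge g_0+\sum_{i\ge 3}g_i$, to show that the vertex, edge, and deficiency accounting cannot be balanced by any disconnected configuration except, exactly when $2n=6$, the graph $(K_4-e)\cup K_2$ (here $K_2$ is the unique component with two maximum independent sets and $K_4-e$ supplies the two required triangles). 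Verifying $i((K_4-e)\cup K_2,x)=i(C_6,x)=1+6x+9x^2+2x^3$ then completes that case. I expect the main obstacle to be precisely this classification-plus-bookkeeping: bounding $n(C_3)$, controlling how triangle-bearing and dense components force compensating tree components through the cyclomatic identity, and checking that the deficiency sum closes off every configuration other than the $C_6$ exception.
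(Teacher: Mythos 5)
Your setup is sound and overlaps with the paper's: reducing to disconnected graphs via Theorem~\ref{thm:cycleconnectedequivclass}, noting that the leading coefficient $2$ of $i(C_{2n},x)$ forces exactly one component to have two maximum independent sets and every other component to have a unique one, and the triangle-free case (Lemma~\ref{lem:eq3} forces $2$-regularity, and a disconnected union of cycles cannot have leading coefficient $2$) are all correct. The genuine gap is exactly where you place ``the main obstacle'': the sub-claim that a connected, triangle-containing graph with at most two maximum independent sets is ``severely restricted'' to essentially the paw and $K_4-e$ is never proved, and as a component-local statement it is not even true. For instance, attach a pendant vertex to a degree-three vertex of $K_4-e$: the result has $5$ vertices, $6$ edges, two triangles, and a \emph{unique} maximum independent set, and such candidates proliferate (further pendant attachments, larger dense blocks, etc.). These configurations can only be excluded by the global vertex/edge/degree accounting that your sketch defers to unspecified ``bookkeeping'' -- checking, over all ways the budgets $\sum_j v_j=\sum_j e_j=2n$, Lemma~\ref{lem:eq3}(iii), and Lemma~\ref{lem:eq3}(iv) could be balanced against tree components, that nothing closes except $(K_4-e)\cup K_2$ at $2n=6$. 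That accounting \emph{is} the theorem; without it the proposal establishes only the easy reductions.

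For contrast, the paper never attempts a component-by-component classification, which is why it avoids this difficulty. It argues globally: since $G$ contains a triangle it is not bipartite, so its two maximum independent sets intersect, giving a partition of $V(G)$ into $A,A',B,B'$ with $A\cup A'$ and $A\cup B$ the two maximum independent sets. Every vertex of $B'$ must have at least two neighbours in $A\cup A'$ (else a third maximum independent set appears); combined with the budget of exactly $2n$ edges and non-bipartiteness, this forces $|B|=|A'|=1$. A short case analysis on where the single remaining edge inside $B\cup B'$ lies, together with the degree-sequence constraints of Lemma~\ref{lem:eq3}(iii) and (iv) (which pin down $g_1,g_2,g_3$ exactly in each case), then yields $G\cong D_{2n}$ or, only when $2n=6$, $G\cong (K_4-e)\cup K_2$. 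If you wish to salvage your component-wise strategy, you would need to prove your classification sub-claim \emph{under} the added global budgets, which in effect means redoing an analysis of this kind.
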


\begin{proof}
Suppose $G \sim C_{2n}$ and $G \not\cong C_{2n}$. Then $G$ has $2n$ vertices and $2n$ edges. For $n=2$ there is only one graph, $D_4$, with $4$ edges and $4$ vertices which is not isomorphic to $C_4$. As  $C_4 \sim D_4$ by Proposition \ref{prop:chismcycle} then $[C_{4}] = \{C_{2n},D_{2n}\}$. We now consider when $n \geq 3$. By Theorem \ref{thm:PathPoly} and Proposition \ref{prop:deletion} it can be shown that $i(G,x)$ is degree $n$ with leading coefficient equal to $2$. That is, there are exactly two maximum independent sets in $G$ of size $n$.

We begin by showing $G$ contains a triangle. Suppose not, and let $g_i$ be the number of vertices of degree $i$ in $G$. By Lemma \ref{lem:eq3} (iii) and (iv), as $G$ is triangle-free (i.e. $n(C_3)=0$) and $G \sim C_{2n}$, 

$$  \sum_{i=2}^{2n-1} {i \choose 2}g_i =2n \text{ } \text{  and   } \text{ } 0 \geq g_0 + \sum_{i=3}^{2n-1} g_i.$$

\noindent Hence $g_i = 0$ for $i \geq 3$ and thus  $\sum_{i=2}^{2n-1} {i \choose 2}g_i =2n$ implies $G$ is 2-regular. However as $G \not\cong C_{2n}$ then $G$ is a disjoint union of cycles. It is easy to see each cycle has at least two maximum independent sets, meaning $G$ must have at least $4$ maximum independent sets which is a contradiction. Thus $G$ contains a triangle.

As $G$ contains a triangle, it is not bipartite, and hence the two maximum independent sets (of cardinality $n$) in $G$ are not disjoint. Thus we can partition the vertices into non-empty sets $A, A', B, B'$ such that $A \cup A'$ and $A \cup B$ are the two independent sets of size $n$. Note $|A \cup A'|=|A \cup B|=|B \cup B'|=n$ and $|A'|=|B|$. It follows that $|A| = |B'|$ and so $|A'|+|B'|=n$.

Each vertex in $B'$ is adjacent to at least two vertices in $A \cup A'$. Otherwise we can form another independent set of size at least $n$ which is not $A \cup A'$ nor $A \cup B$. Thus our partially constructed $G$ looks like:

\begin{center}
\scalebox{1}{
\begin{tikzpicture}
\draw (0,0.9) ellipse (1.9 and 0.5);
\node[text width=1cm] at (0,1) {\large $A'$};

\draw (4,0.9) ellipse (1.9 and 0.5);
\node[text width=1cm] at (4.25,1) {\large $A$};

\draw (0,-0.9) ellipse (1.9 and 0.5);
\node[text width=1cm] at (0,-1) {\large $B$};

\draw (4,-0.9) ellipse (1.9 and 0.5);
\node[text width=1cm] at (4.25,-1) {\large $B'$};


\begin{scope}

    \path [-] (-1.4+4,-0.75) edge node {} (-1.5+2,0.6);
    \path [-] (-1.4+4,-0.75) edge node {} (-0.9+2,0.6);
    
    \path [-] (-0.7+4,-0.75) edge node {} (-0.8+2.2,0.7);
    \path [-] (-0.7+4,-0.75) edge node {} (-0.2+4,0.6);
    
    \path [-] (0+4,-0.75) edge node {} (-0.1+4,0.6);
    \path [-] (0+4,-0.75) edge node {} (0.5+4,0.6);
    
    \path [-] (1.4+4,-0.75) edge node {} (0.9+4,0.6);
    \path [-] (1.4+4,-0.75) edge node {} (1.5+4,0.7);
    
    \path (0+4,-0.75) -- node[auto=false]{\ldots} (1.4+4,-0.75);

\end{scope}


\end{tikzpicture}}
\end{center}

We now consider two cases: $|B| \geq 2$ and $|B|=1$. If $|B| \geq 2$, then by the same argument used for  $B'$ and $A \cup A'$, each vertex in $A'$ is adjacent to at least two vertices in $B$. Thus $G$ has is at least $2(|A'|+|B'|)=2n$ edges. However, as $G$ is not bipartite and has exactly $2n$ edges, there must be an edge between two vertices of $B \cup B'$, a contradiction.

Now suppose $|B|=1$. As $|B|=|A'|$, we now have that $|A'|=1$. We will label the vertex in $A'$ and the vertex in $B$ to be $a'$ and $b$, respectively. Note $a'$ and $b$ are adjacent, as otherwise $A \cup A' \cup B$ forms a independent set of size $n+1$. Thus our partially constructed $G$ (omitting one edge in $B\cup B'$) looks like this:
\begin{center}
\scalebox{1}{
\begin{tikzpicture}

\draw (4,0.9) ellipse (1.9 and 0.5);
\node[text width=1cm] at (4.25,1) {\large $A$};


\draw (4,-0.9) ellipse (1.9 and 0.5);
\node[text width=1cm] at (4.25,-1) {\large $B'$};

\node[shape=circle,draw=black,fill=black,label=left:$a'$] (a) at (1,0.9) {};
\node[shape=circle,draw=black,fill=black,label=left:$b$] (b) at (1,-0.9) {};

\begin{scope}

    \path [-] (-1.4+4,-0.75) edge node {} (a);
    \path [-] (-1.4+4,-0.75) edge node {} (-0.7+4,0.6);
    
    \path [-] (-0.7+4,-0.75) edge node {} (a);
    \path [-] (-0.7+4,-0.75) edge node {} (-0.2+4,0.6);
    
    \path [-] (0+4,-0.75) edge node {} (-0.1+4,0.6);
    \path [-] (0+4,-0.75) edge node {} (0.5+4,0.6);
    
    \path [-] (1.4+4,-0.75) edge node {} (0.9+4,0.6);
    \path [-] (1.4+4,-0.75) edge node {} (1.5+4,0.7);
    
    \path [-] (a) edge node {} (b);
    
    \path (0+4,-0.75) -- node[auto=false]{\ldots} (1.4+4,-0.75);

\end{scope}


\end{tikzpicture}}
\end{center}

We will consider the placement of the edge in $B \cup B'$ in two cases.

\vspace{3mm}
\noindent \textbf{Case 1: The edge is from $b$ to some vertex $v \in B'$,}
\vspace{3mm}

Then as $G$ contains a triangle, $v$ must be adjacent to $a'$ (note this is only triangle in $G$). All vertices in $B \cup B'$ are now degree two with the exception of $v$ which has degree three. Thus $G$ now looks like:

\begin{center}
\scalebox{1}{
\begin{tikzpicture}

\draw (4,0.9) ellipse (1.9 and 0.5);
\node[text width=1cm] at (4.25,1) {\large $A$};


\draw (4,-0.9) ellipse (1.9 and 0.5);
\node[text width=1cm] at (4.25,-1) {\large $B'$};

\node[shape=circle,draw=black,fill=black,label=left:$a'$] (a) at (1,0.9) {};
\node[shape=circle,draw=black,fill=black,label=left:$b$] (b) at (1,-0.9) {};
\node[shape=circle,draw=black,fill=black,label=right:$v$] (v) at (-1.4+4,-0.9) {};

\begin{scope}

    \path [-] (v) edge node {} (a);
    \path [-] (v) edge node {} (b);
    \path [-] (v) edge node {} (-0.7+4,0.6);
    
    \path [-] (-0.7+4,-0.75) edge node {} (a);
    \path [-] (-0.7+4,-0.75) edge node {} (-0.2+4,0.6);
    
    \path [-] (0+4,-0.75) edge node {} (-0.1+4,0.6);
    \path [-] (0+4,-0.75) edge node {} (0.5+4,0.6);
    
    \path [-] (1.4+4,-0.75) edge node {} (0.9+4,0.6);
    \path [-] (1.4+4,-0.75) edge node {} (1.5+4,0.7);
    
    \path [-] (a) edge node {} (b);
    
    \path (0+4,-0.75) -- node[auto=false]{\ldots} (1.4+4,-0.75);

\end{scope}


\end{tikzpicture}}
\end{center}

We now know that $G$ has exactly one triangle (i.e. $n(C_3)=1$) and $G \sim C_{2n}$ so, by Lemma \ref{lem:eq3} (iv), $G$ has at most one vertex which is not degree one or two. As $v$ is degree three then every other vertex must either have degree one or two. Again let $g_i$ be the number of vertices of degree $i$ in $G$. Note $g_i=0$ for $i \neq 1,2,3$, $g_3=1$, and $g_1+g_2+g_3=2n$. Furthermore by Lemma \ref{lem:eq3} (iii),

$$2n+1=\sum_{i=2}^{2n-1} {i \choose 2}g_i=g_2+3.$$

\noindent Thus $g_2=2n-2$, $g_3=1$ and $g_1=1$. Note $a'$ must have degree two. We now construct $G$. Begin with the one triangle in $G$ which is formed by the vertices $a'$, $b$, and $v$. As $v$ is a degree three vertex it must have a neighbour in $A$. Now label the only vertex of degree one as $\ell$. As the other vertices in $G$ are all degree two, there must be an induced path of vertices connecting $v$ and $\ell$. This forms a $D_r$ component in $G$ for some $r \leq n$. If $r=n$ then $G \cong D_n$, otherwise $G$ is the disjoint union of cycles and a $D_r$ for $r < n$. However as $D_n$ has two maximum independent sets, if $G$ has any cycle components it would have at least four maximum independent sets which is a contradiction.   

\vspace{3mm}
\noindent \textbf{Case 2:}
\vspace{3mm}

If the edge in $B\cup B'$ is between two vertices $u,v \in B'$, then as $G$ contains a triangle, $u$ and $v$ must have at least one common neighbour in $A \cup A'$. Note that we now know the number of vertices of each degree in $B \cup B'$; $b$ is degree one, $u$ and $v$ are degree three and every other vertex in $B \cup B'$ is degree two. Thus we consider two subcases: $u$ and $v$ have one or two common neighbours.

\vspace{3mm}
\noindent \textbf{Case 2a: $u$ and $v$ have exactly one common neighbour}
\vspace{3mm}

Then $G$ has exactly one triangle and now looks like:

\begin{center}
\scalebox{1}{
\begin{tikzpicture}

\draw (4,0.9) ellipse (1.9 and 0.5);
\node[text width=1cm] at (4.25,1) {\large $A$};


\draw (4,-0.9) ellipse (1.9 and 0.5);
\node[text width=1cm] at (4.75,-1) {\large $B'$};

\node[shape=circle,draw=black,fill=black,label=left:$a'$] (a) at (1,0.9) {};
\node[shape=circle,draw=black,fill=black,label=left:$b$] (b) at (1,-0.9) {};

\node[shape=circle,draw=black,fill=black,label=below:$u$] (u) at (3.2,-0.7) {};
\node[shape=circle,draw=black,fill=black,label=below:$v$] (v) at (4,-0.7) {};


\begin{scope}

    \path [-] (-1.5+4,-0.75) edge node {} (a);
    \path [-] (-1.5+4,-0.75) edge node {} (-1.3+4,0.7);
    
    \path [-] (-1.2+4,-0.75) edge node {} (a);
    \path [-] (-1.2+4,-0.75) edge node {} (-0.9+4,0.6);
    
    \path [-] (0.5+4,-0.75) edge node {} (0+4,0.6);
    \path [-] (0.5+4,-0.75) edge node {} (0.7+4,0.6);
    
    \path [-] (1.4+4,-0.75) edge node {} (0.9+4,0.6);
    \path [-] (1.4+4,-0.75) edge node {} (1.5+4,0.7);
    
    \path [-] (a) edge node {} (b);
    \path [-] (u) edge node {} (v);
    \path [-] (u) edge node {} (3.6,.6);
    \path [-] (u) edge node {} (-1.3+4,0.7);
    \path [-] (v) edge node {} (3.6,.6);
    \path [-] (v) edge node {} (0.7+4,0.6);
    

\end{scope}


\end{tikzpicture}}
\end{center}

As $G$ has exactly one triangle (i.e. $n(C_3)=1$) and $G \sim C_{2n}$, Lemma \ref{lem:eq3} (iv) gives that $g_3\le 1$. However $u$ and $v$ both have degree three which is a contradiction.

\vspace{3mm}
\noindent \textbf{Case 2b: $u$ and $v$ have exactly two common neighbours.}
\vspace{3mm}

Then $G$ has exactly two triangles and looks like:

\begin{center}
\scalebox{1}{
\begin{tikzpicture}

\draw (4,0.9) ellipse (1.9 and 0.5);
\node[text width=1cm] at (4.25,1) {\large $A$};


\draw (4,-0.9) ellipse (1.9 and 0.5);
\node[text width=1cm] at (4.75,-1) {\large $B'$};

\node[shape=circle,draw=black,fill=black,label=left:$a'$] (a) at (1,0.9) {};
\node[shape=circle,draw=black,fill=black,label=left:$b$] (b) at (1,-0.9) {};

\node[shape=circle,draw=black,fill=black,label=below:$u$] (u) at (3.25,-0.7) {};
\node[shape=circle,draw=black,fill=black,label=below:$v$] (v) at (4,-0.7) {};


\begin{scope}

    \path [-] (-1.5+4,-0.75) edge node {} (a);
    \path [-] (-1.5+4,-0.75) edge node {} (-1.3+4,0.7);
    
    \path [-] (-1.2+4,-0.75) edge node {} (a);
    \path [-] (-1.2+4,-0.75) edge node {} (-0.9+4,0.6);
    
    \path [-] (0.5+4,-0.75) edge node {} (0+4,0.6);
    \path [-] (0.5+4,-0.75) edge node {} (0.5+4,0.6);
    
    \path [-] (1.4+4,-0.75) edge node {} (0.9+4,0.6);
    \path [-] (1.4+4,-0.75) edge node {} (1.5+4,0.7);
    
    \path [-] (a) edge node {} (b);
    \path [-] (u) edge node {} (v);
    \path [-] (u) edge node {} (3.85,.6);
    \path [-] (u) edge node {} (3.35,.6);
    \path [-] (v) edge node {} (3.85,.6);
    \path [-] (v) edge node {} (3.35,0.6);
    

\end{scope}


\end{tikzpicture}}
\end{center}

Since $G$ has exactly two triangles (i.e. $n(C_3)=2$) and $G \sim C_{2n}$, Lemma \ref{lem:eq3} (iv) implies that $\displaystyle{\sum_{i\neq 1,2}}g_i\le 2$. Both $u$ and $v$ have degree three so every other vertex must either have degree one or two. Note $g_i=0$ for $i \neq 1,2,3$, $g_3=2$, and $g_1+g_2+g_3=2n$. Furthermore by Lemma \ref{lem:eq3} (iii),

$$2n+2=\sum_{i=2}^{2n-1} {i \choose 2}g_i=g_2+6.$$

\noindent Thus $g_2=2n-4$, $g_3=2$ and $g_1=2$. Note that every in $A \cup A'$ has degree at most three, thus $u$, $v$ and their two common neighbours form a $K_4$ less an edge component of $G$. Furthermore $G$ has two vertices of degree one. As $b$ is degree one and every vertex in $B'$ is degree two or three, the second of vertex degree one is $a'$ or some vertex in $A$.

First suppose some vertex $\ell \in A$ is degree one. At this point our graph looks like:

\begin{center}
\scalebox{1}{
\begin{tikzpicture}

\draw (4,0.9) ellipse (1.9 and 0.5);
\node[text width=5cm] at (4.75,1.8) {\large $A-N(u)-N(v)$};


\draw (4,-0.9) ellipse (1.9 and 0.5);
\node[text width=5cm] at (5.25,-1.8) {\large $B'-\{u,v\}$};

\node[shape=circle,draw=black,fill=black,label=left:$a'$] (a) at (1,0.9) {};
\node[shape=circle,draw=black,fill=black,label=left:$b$] (b) at (1,-0.9) {};

\node[shape=circle,draw=black,fill=black,label=below:$u$] (u) at (0,-0.9) {};
\node[shape=circle,draw=black,fill=black,label=below:$v$] (v) at (-1,-0.9) {};
\node[shape=circle,draw=black,fill=black] (1) at (0,0.9) {};
\node[shape=circle,draw=black,fill=black] (2) at (-1,0.9) {};

\node[shape=circle,draw=black] (1) at (0,0.9) {};
\node[shape=circle,draw=black] (2) at (-1,0.9) {};

\begin{scope}

    \path [-] (-1.4+4,-0.75) edge node {} (a);
    \path [-] (-1.4+4,-0.75) edge node {} (-0.7+4,0.6);
    
    \path [-] (-0.7+4,-0.75) edge node {} (-0.7+3,0.75);
    \path [-] (-0.7+4,-0.75) edge node {} (-0.2+4,0.6);
    
    \path [-] (0+4,-0.75) edge node {} (-0.1+4,0.6);
    \path [-] (0+4,-0.75) edge node {} (0.5+4,0.6);
    
    \path [-] (1.4+4,-0.75) edge node {} (0.9+4,0.6);
    \path [-] (1.4+4,-0.75) edge node {} (1.5+4,0.7);
    
    \path [-] (a) edge node {} (b);
    \path [-] (u) edge node {} (v);
    \path [-] (u) edge node {} (1);
    \path [-] (u) edge node {} (2);
    \path [-] (v) edge node {} (1);
    \path [-] (v) edge node {} (2);
    
    \path (0+4,-0.75) -- node[auto=false]{\ldots} (1.4+4,-0.75);

\end{scope}


\end{tikzpicture}}
\end{center}

Note that every vertex in $B'-\{u,v\}$ and $A-N(u)-N(v)$ is degree two other than $\ell$. Therefore one component in $G$ is a even order path from $b$ to $\ell$. However, every even path with more than two vertices has at least three maximum independent sets, which is a contradiction as $G$ only has two maximum independent sets.

Now suppose $a'$ is degree one. Then $a'$ and $b$ form a $K_2$ component in $G$ and the remaining vertices in $(B'-\{u,v\})\cup(A-N(u))$ must induce a disjoint union of cycles. In the case where $n=3$, that is $G \sim C_6$, $G$ has no cycle components and $G \cong (K_4-e) \cup K_2$. For $n \geq 4$, $(B'-\{u,v\})\cup(A-N(u))$ contains at least one cycle. However, as $K_2$ and cycles each have at least two maximum independent sets, $G$ has at least four maximum independent sets, which is again a contradiction.

The only two cases which didn't result in a contradiction yielded $G \cong D_{2n}$ and $G \cong (K_4-e) \cup K_2$. As $D_{2n} \sim C_{2n}$ for all $n \geq 3$ and $(K_4-e) \cup K_2 \sim C_6$ we have shown that  $[C_{6}] = \{C_{6},D_{6}, (K_4-e) \cup K_2 \}$ and $[C_{2n}] = \{D_{2n},D_{2n}\}$ for $n \geq 4$.
\end{proof}
\subsection{Prime Power Cycles}
In Theorem \ref{thm:evencycle}, we used an involved construction to show that there is only one disconnected graph that is independence equivalent to $C_{2n}$. This construction relies on the fact that the leading coefficient of $i(C_{2n},x)$ is 2. This argument will not hold for odd cycles as the leading coefficient of $i(C_{2n+1},x)$ is $2n+1$. However there are other ways to show connectivity, and we shall do so via irreducibility of polynomials over the rationals. We will Eisenstein's famous criterion for irreducibility that we state here

\begin{theorem}[c.f. \cite{Fraleigh} pp. 215]
Let $p\in \mathbb{Z}$ be a prime and $f(x)=a_0+a_1x+\ldots+a_nx^n$ be a polynomial of degree $n$ with integer coefficients. If $p$ divides each of $a_0,a_1,\ldots,a_{n-1}$ but $p$ does not divide $a_n$, and $p^2$ does not divide $a_0$, then $f$ is irreducible over the rationals. \QED
\end{theorem}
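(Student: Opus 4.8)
The plan is to argue by contradiction using Gauss's lemma to pass from a rational factorization to an integer one, followed by a minimal-index argument on the coefficients. Suppose, for contradiction, that $f$ is reducible over $\mathbb{Q}$, so that $f=gh$ where $g,h\in\mathbb{Q}[x]$ each have degree at least $1$. First I would invoke Gauss's lemma to replace this with a factorization over the integers: since $f\in\mathbb{Z}[x]$, one can clear denominators and redistribute contents so as to obtain $g=\sum_{i=0}^{r}b_ix^i$ and $h=\sum_{j=0}^{s}c_jx^j$ in $\mathbb{Z}[x]$ with $r,s\ge 1$ and $r+s=n$.

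Next I would examine the extreme coefficients. Comparing constant terms gives $a_0=b_0c_0$. Since $p\mid a_0$ but $p^2\nmid a_0$, the prime $p$ divides exactly one of $b_0,c_0$; without loss of generality $p\mid b_0$ and $p\nmid c_0$. Comparing leading coefficients gives $a_n=b_rc_s$, and since $p\nmid a_n$ we must have $p\nmid b_r$. Hence not every coefficient of $g$ is divisible by $p$, so I can let $k$ be the least index with $p\nmid b_k$; note $1\le k\le r<n$. The crux is then to expand
\[ a_k=b_kc_0+b_{k-1}c_1+\cdots+b_0c_k, \]
where terms with out-of-range indices are taken to be zero. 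By minimality of $k$, every $b_i$ with $i<k$ is divisible by $p$, so $p$ divides $b_{k-1}c_1+\cdots+b_0c_k$; and since $k<n$, the hypothesis gives $p\mid a_k$. Subtracting, $p\mid b_kc_0$, which contradicts $p\nmid b_k$ and $p\nmid c_0$ (as $p$ is prime). This contradiction shows $f$ is irreducible over $\mathbb{Q}$.

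The only genuinely delicate step is the reduction from a rational factorization to an integer one. I expect this to be the main obstacle, since it rests on the content/primitivity machinery underlying Gauss's lemma rather than on elementary coefficient manipulation; the subsequent index-chasing argument is then purely formal. One must also phrase irreducibility over $\mathbb{Q}$ correctly --- namely, as the impossibility of writing $f$ as a product of two factors each of positive degree --- since over a field the nonzero constants are units, so constant factors are irrelevant. Given the standard form of Gauss's lemma, however, this reduction is routine, and no further ingredients are needed.
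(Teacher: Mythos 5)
Your proof is correct. Note that the paper itself gives no proof of this statement at all --- it is quoted as Eisenstein's criterion with a citation to Fraleigh's textbook --- and your argument (Gauss's lemma to descend to an integer factorization, then the minimal-index coefficient analysis modulo $p$) is precisely the standard proof found in that reference, so there is nothing to compare beyond agreement.
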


\begin{prop}\label{prop:primecycles}
If $p$ is an odd prime, then $[C_p]=\{C_p,D_p\}$ (note $C_p \cong D_p$ when $p=3$).
\end{prop}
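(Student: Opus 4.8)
The plan is to reduce the whole statement to a single irreducibility fact. By Theorem~\ref{thm:cycleconnectedequivclass}, the only \emph{connected} graphs in $[C_p]$ are $C_p$ and $D_p$, so it suffices to rule out disconnected graphs. If $G \sim C_p$ were disconnected, writing $G = G_1 \cup \cdots \cup G_k$ with $k \ge 2$, then Proposition~\ref{prop:deletion}(ii) gives $i(C_p,x) = i(G,x) = \prod_{i=1}^k i(G_i,x)$, a product of $k \ge 2$ polynomials, each of degree at least $1$ (every component has a vertex) and each with constant term $1$ (the empty set is independent). Thus a disconnected $G$ would force $i(C_p,x)$ to be reducible over $\mathbb{Q}$. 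So the proposition follows once I show that $i(C_p,x)$ is irreducible over $\mathbb{Q}$; the case $p=3$ is immediate, since then $C_3 \cong D_3$ and $i(C_3,x)=1+3x$ is linear.

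To get the coefficients I would apply Proposition~\ref{prop:deletion}(i) at any vertex of $C_p$, using $C_p - v = P_{p-1}$ and $C_p - N[v] = P_{p-3}$, to obtain $i(C_p,x) = i(P_{p-1},x) + x\, i(P_{p-3},x)$. Combining this with the path formula of Theorem~\ref{thm:PathPoly} and simplifying, the coefficient of $x^j$ in $i(C_p,x)$ is
\[
a_j = \binom{p-j}{j} + \binom{p-1-j}{j-1} = \frac{p}{p-j}\binom{p-j}{j}, \qquad 0 \le j \le \tfrac{p-1}{2},
\]
where $\deg i(C_p,x) = \alpha(C_p) = (p-1)/2$. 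In particular $a_0 = 1$ and the leading coefficient is $a_{(p-1)/2} = p$.

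The key observation is that $i(C_p,x)$ is exactly \emph{anti}-Eisenstein: its leading coefficient $p$ is divisible by $p$ while its constant term $1$ is not, so Eisenstein's criterion cannot be applied to it directly. Instead I would apply it to the reciprocal (reverse) polynomial $g(x) = x^{(p-1)/2}\, i(C_p,1/x)$, whose coefficients are the $a_j$ in reverse order. A $p$-adic valuation computation settles the hypotheses: for $1 \le j \le (p-1)/2$ one has $p - j \in \{(p+1)/2, \ldots, p-1\}$ and every factor of $\binom{p-j}{j}$ is smaller than $p$, so $v_p(p-j) = v_p\!\binom{p-j}{j} = 0$ and hence $v_p(a_j) = v_p(p) = 1$. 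Thus every $a_j$ with $1 \le j \le (p-1)/2$ is divisible by $p$ but not by $p^2$. For $g$ this says precisely that its leading coefficient $a_0 = 1$ is not divisible by $p$, all remaining coefficients $a_1,\ldots,a_{(p-1)/2}$ are divisible by $p$, and its constant term $a_{(p-1)/2} = p$ is divisible by $p$ but not by $p^2$. Eisenstein's criterion therefore shows $g$ is irreducible over $\mathbb{Q}$.

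Finally, since $a_0 = i(C_p,0) = 1 \neq 0$, passing to the reciprocal polynomial is an involution on polynomials with nonzero constant term that preserves degrees of factors and respects factorizations, so $i(C_p,x)$ is irreducible over $\mathbb{Q}$ if and only if $g$ is. This completes the reduction and hence the proof. The only genuinely delicate step is the valuation bookkeeping together with the realization that one must pass to the reciprocal polynomial before invoking Eisenstein; once these are in place the argument is forced, and verifying the closed form for $a_j$ and the edge term $j=(p-1)/2$ is routine.
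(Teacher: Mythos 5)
Your proposal is correct and follows essentially the same route as the paper: reduce to irreducibility via Theorem~\ref{thm:cycleconnectedequivclass}, derive the coefficients $\frac{p}{p-j}\binom{p-j}{j}$ from $i(C_p,x)=i(P_{p-1},x)+x\,i(P_{p-3},x)$, verify the leading coefficient is exactly $p$, and apply Eisenstein's criterion to the reversed polynomial $x^{(p-1)/2}i(C_p,1/x)$. The only cosmetic difference is that you establish $p \mid a_j$ (and $p^2 \nmid a_j$) by a direct $p$-adic valuation of $\binom{p-j}{j}$, whereas the paper argues via integrality of the coefficients and coprimality of $p-j$ with $p$; both are fine, and your explicit treatment of why disconnectedness forces a nontrivial factorization is a nice touch the paper leaves implicit.
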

\begin{proof}
We show that $i(C_p,x)$ is irreducible over the rationals and therefore $C_p$ has no disconnected graphs in its equivalence class. The result will then follow by Proposition~\ref{thm:cycleconnectedequivclass}. Let $p$ be an odd prime. By Theorem~\ref{thm:PathPoly} and Proposition~\ref{prop:deletion} we know that 
\begin{align*}
i(C_p,x)&=i(P_{p-1},x)+xi(P_{p-3},x)\\
&= \sum_{j=0}^{\lfloor \frac{p}{2} \rfloor} { {p -j} \choose {j}}x^j+\sum_{j=0}^{\lfloor \frac{p-2}{2} \rfloor} { {p-2-j} \choose {j}}x^{j+1}\\
&=\sum_{j=0}^{\lfloor \frac{p}{2} \rfloor} { {p -j} \choose {j}}x^j+\sum_{j=1}^{\lfloor \frac{p}{2} \rfloor} { {p-j-1} \choose {j-1}}x^{j}\\
&=1+\sum_{j=1}^{\lfloor \frac{p}{2} \rfloor}\left( { {p -j} \choose {j}}+{ {p-j-1} \choose {j-1}}\right)x^j\\
&=1+\sum_{j=1}^{\lfloor \frac{p}{2} \rfloor}{ {p -j} \choose {j}}\left(\frac{p}{p-j}\right)x^j.\\
\end{align*}

The coefficients above must be integers and since $p$ is a prime, it follows that $p-j$ does not divide $p$ for any $j=1,2,\ldots, \lfloor \frac{p}{2} \rfloor$, so $p-j$ must divide the integer ${ {p -j} \choose {j}}$.  Therefore, ${ {p -j} \choose {j}}\left(\frac{p}{p-j}\right)$ is a multiple of $p$ for $j=1,2,\ldots, \lfloor \frac{p}{2} \rfloor$.  We now consider the coefficient of $x^{\lfloor \frac{p}{2} \rfloor}$,
\begin{align*}
{ {p -\lfloor \frac{p}{2} \rfloor} \choose {\lfloor \frac{p}{2} \rfloor}}\left(\frac{p}{p-\lfloor \frac{p}{2} \rfloor}\right)&=\left(\frac{(p -\lfloor \frac{p}{2}\rfloor-1)!}{\lfloor \frac{p}{2}\rfloor!(p -2\lfloor \frac{p}{2}\rfloor)!} \right)p\\
&=\left(\frac{(p -\lceil \frac{p}{2}\rceil)!}{\lfloor \frac{p}{2}\rfloor!(\lceil \frac{p}{2}\rceil) -\lfloor \frac{p}{2}\rfloor)!} \right)p\\
&=\left(\frac{\lfloor \frac{p}{2}\rfloor!}{\lfloor \frac{p}{2}\rfloor!} \right)p\\
&=p.
\end{align*}

Therefore, applying Eisenstein's famous criterion to the polynomial $x^{\alpha(C_p)}i(C_p,\tfrac{1}{x})$ with the prime $p$, it follows that $i(C_p,x)$ is irreducible over the rationals. Since $i(C_p,x)$ is irreducible, $C_p$ cannot be independence equivalent to any disconnected graph. It follows that $[C_p]=\{C_p,D_p\}$ by Theorem~\ref{thm:cycleconnectedequivclass}.

\end{proof}
The irreducibility of cycles of prime length given in Proposition~\ref{prop:primecycles} can be partially extended to cycle with length $p^n$ for all $n$ and all odd primes $p\ge 5$. These polynomials are reducible but considering each irreducible factor will lead us to the same conclusion as the case for $n=1$.

\begin{defn}\label{def:cyclicpoly}
We say that a polynomial $p(x)=\sum_{i=0}^np_ix^i$ with integer coefficients is \textbf{unicyclic} if $p_0=1$, $p_1=k$ and $p_2=\binom{k}{2}-k$ for some integer $k$.
\end{defn}

Note that a unicyclic polynomial is one that shares the same first three coefficients with the independence polynomial of some unicyclic graph. If a connected graph has a unicyclic independence polynomial, then that graph must be unicyclic. This is because the graph has $n$ vertices, $n$ edges, and is connected.

\begin{lemma}\label{lem:productunicyclic}
If $h(x)=g(x)f(x)$ and $h(x)$, $g(x)$ are unicyclic, then $f(x)$ is unicyclic.
\end{lemma}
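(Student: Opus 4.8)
The plan is to read off the first three coefficients on both sides of the identity $h(x)=g(x)f(x)$ and to check directly that $f$ inherits the unicyclic relation. Write $h(x)=\sum_i h_ix^i$, $g(x)=\sum_i g_ix^i$, $f(x)=\sum_i f_ix^i$, adopting the convention that any coefficient beyond the degree of a polynomial equals $0$, so that the argument stays uniform even when one of $g$, $f$, $h$ has degree less than $2$. Since $h$ and $g$ are unicyclic, I would set $a=h_1$ and $b=g_1$; then $h_0=g_0=1$, $\,h_2=\binom{a}{2}-a$, and $g_2=\binom{b}{2}-b$.

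First I would extract $f_0,f_1,f_2$ from the Cauchy product. The constant-term equation $h_0=g_0f_0$ gives $f_0=1$; the linear equation $h_1=g_0f_1+g_1f_0$ gives $f_1=a-b$; and the quadratic equation $h_2=g_0f_2+g_1f_1+g_2f_0$ gives $f_2=h_2-g_2-g_1f_1$. It then remains to verify the single algebraic identity
\[
h_2-g_2-g_1f_1=\frac{a^2-3a}{2}-\frac{b^2-3b}{2}-b(a-b)=\frac{(a-b)^2-3(a-b)}{2}=\binom{a-b}{2}-(a-b).
\]
Since $f_1=a-b$, this is exactly $f_2=\binom{f_1}{2}-f_1$, so $f$ satisfies the defining relation of a unicyclic polynomial with parameter $k=f_1$.

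The one point that needs care, and the only step that is not a mechanical coefficient comparison, is confirming that $f$ genuinely has integer coefficients, as Definition~\ref{def:cyclicpoly} requires. This I would obtain from Gauss's lemma: because $g_0=1$, the polynomial $g$ is primitive, and since $h\in\mathbb{Z}[x]$ factors as $gf$ with $f\in\mathbb{Q}[x]$, the cofactor $f$ must in fact lie in $\mathbb{Z}[x]$. Equivalently, $g$ has constant term $1$ and is therefore a unit in $\mathbb{Z}[[x]]$, so $f=h\cdot g^{-1}$ has integer coefficients. Together with $f_0=1$ and the relation $f_2=\binom{f_1}{2}-f_1$, all three conditions of Definition~\ref{def:cyclicpoly} are met, so $f$ is unicyclic.

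I anticipate no serious obstacle: the heart of the matter is the elementary identity above, which collapses precisely because the quadratic coefficient of a unicyclic polynomial is a fixed quadratic function of its linear coefficient. The only thing worth double-checking is that no hidden degree hypothesis is needed, but since absent coefficients are read as $0$, the three coefficient equations hold irrespective of the degrees of $f$, $g$, and $h$, making the computation valid in full generality.
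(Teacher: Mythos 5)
Your proof is correct and takes essentially the same approach as the paper's: both extract $f_0,f_1,f_2$ by comparing the first three coefficients of $h=gf$ and reduce the claim to the identity $\binom{a}{2}-a-\left(\binom{b}{2}-b\right)-b(a-b)=\binom{a-b}{2}-(a-b)$, which the paper phrases as showing a slack term $\ell$ must vanish. Your Gauss's-lemma step confirming that $f$ has integer coefficients is a small tightening that the paper omits (its proof tacitly writes the coefficients of $f$ as $1$, $k$, $\binom{k}{2}-k+\ell$ with $k,\ell\in\mathbb{Z}$), but it does not change the substance of the argument.
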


\begin{proof}
Assuming the hypothesis, let the first three terms of $g(x)$ be $1,nx,\left(\binom{n}{2}-n\right)x^2$, the first three terms of  $f(x)$ be $1,kx,\left(\binom{k}{2}-k+\ell\right)x^2$ where $\ell$ is some integer, so that the first three terms of $h(x)$ are $1,(n+k)x,\left(\binom{n+k}{2}-(n+k)\right)x^2$. Since $h(x)=f(x)g(x)$, they must be equal coefficient-wise so we must have,

\begin{align*}
\binom{n+k}{2}-(n+k)&=\binom{n}{2}-n+\binom{k}{2}-k+\ell+nk\\
&=\frac{n(n-1)+k(k-1)+2nk}{2}-(n+k)+\ell\\
&=\frac{(n+k)((n+k)-1)}{2}-(n+k)+\ell\\
&=\frac{(n+k)^2-(n+k)}{2}-(n+k)+\ell\\
&=\binom{n+k}{2}-(n+k)+\ell.\\
\end{align*}

Therefore, $\ell=0$, and $f(x)$ is unicyclic.
\end{proof}

The roots of $i(C_n,x)$ have been completely determined by Alikahni and Peng \cite{AlikhaniPeng2011} and we will make use of a corollary that can be derived from their results.

\begin{theorem}[\cite{AlikhaniPeng2011}]\label{thm:cycleroots}
For $n\ge 3$, the roots of $i(C_n,x)$ are given by

$$r_i=-\frac{1}{2\left(1+\cos\left(\frac{(2i-1)\pi}{n}\right) \right)}$$
for $i=1,2,\ldots,\lfloor\frac{n}{2}\rfloor$, and these roots are all distinct.
\QED
\end{theorem}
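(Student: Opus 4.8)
The plan is to obtain a closed form for $i(C_n,x)$ as a Lucas-type sequence and then read off its roots by a trigonometric substitution. First I would establish the two-term recurrence $i(C_n,x)=i(C_{n-1},x)+x\,i(C_{n-2},x)$. This follows from Proposition~\ref{prop:deletion}: deleting a single vertex of the cycle gives $i(C_n,x)=i(P_{n-1},x)+x\,i(P_{n-3},x)$, and since the path polynomials themselves satisfy $i(P_m,x)=i(P_{m-1},x)+x\,i(P_{m-2},x)$, the cycle polynomials inherit the same linear recurrence in $n$. The associated characteristic equation is $t^2-t-x=0$, with roots $t_\pm=\tfrac{1}{2}\left(1\pm\sqrt{1+4x}\right)$ satisfying $t_++t_-=1$ and $t_+t_-=-x$. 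The general solution is $A\,t_+^{\,n}+B\,t_-^{\,n}$, and matching two base values (for instance $i(C_3,x)=1+3x$ and $i(C_4,x)=1+4x+2x^2$) forces $A=B=1$, so that $i(C_n,x)=t_+^{\,n}+t_-^{\,n}$.

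With this closed form, the equation $i(C_n,x)=0$ becomes $t_+^{\,n}=-t_-^{\,n}$. Guided by the expectation that the roots are negative reals smaller than $-\tfrac14$, I would look for roots in the regime $1+4x<0$, where $t_\pm$ are complex conjugates. Writing $t_\pm=r e^{\pm i\theta}$, the relations $t_+t_-=r^2=-x$ and $t_++t_-=2r\cos\theta=1$ give $r\cos\theta=\tfrac12$, and then $i(C_n,x)=2r^{\,n}\cos(n\theta)$. Hence a root occurs precisely when $\cos(n\theta)=0$, i.e. $\theta=\tfrac{(2i-1)\pi}{2n}$. Substituting back, $x=-r^2=-\tfrac{1}{4\cos^2\theta}$, and the half-angle identity $4\cos^2\theta=2\left(1+\cos 2\theta\right)$ with $2\theta=\tfrac{(2i-1)\pi}{n}$ yields exactly the claimed value $r_i=-\tfrac{1}{2\left(1+\cos\left((2i-1)\pi/n\right)\right)}$.

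It remains to check that these account for all roots and are distinct, which is where I expect the real (if modest) care to lie. The degree of $i(C_n,x)$ equals $\alpha(C_n)=\lfloor n/2\rfloor$, so it suffices to produce exactly $\lfloor n/2\rfloor$ distinct roots. For $i=1,\dots,\lfloor n/2\rfloor$ one checks $2i-1<n$, so each $\theta_i=\tfrac{(2i-1)\pi}{2n}$ lies in $(0,\tfrac{\pi}{2})$, where cosine is positive and strictly decreasing; this guarantees both that the $r_i$ are genuinely distinct and that each satisfies $1+4x<0$, consistent with the complex-conjugate assumption. Since we have exhibited $\lfloor n/2\rfloor$ distinct roots of a degree-$\lfloor n/2\rfloor$ polynomial, these are all of them (in particular no root with $x\ge-\tfrac14$ is missed). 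The main obstacle is thus not any single hard estimate but the bookkeeping: pinning down the closed form with the correct constants, confining attention to the branch $1+4x<0$, and confirming that the admissible angles $\theta_i$ number exactly $\lfloor n/2\rfloor$ without duplication, so that distinctness and completeness hold simultaneously.
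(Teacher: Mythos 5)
Your proposal is correct, but there is nothing in the paper to compare it against: the paper states this theorem as an imported result, citing Alikhani and Peng, and gives no proof of its own. So your argument is a genuine addition rather than a rederivation of the paper's reasoning. The steps all check out. The recurrence $i(C_n,x)=i(C_{n-1},x)+x\,i(C_{n-2},x)$ does follow from $i(C_n,x)=i(P_{n-1},x)+x\,i(P_{n-3},x)$ together with the path recurrence, and matching the base cases $i(C_3,x)=1+3x$, $i(C_4,x)=1+4x+2x^2$ (the characteristic roots $t_\pm$ are distinct for generic $x$, so the representation $A\,t_+^n+B\,t_-^n$ is unique) gives the Lucas-type closed form $i(C_n,x)=t_+^n+t_-^n$. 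In the regime $1+4x<0$ one indeed gets $t_\pm=\tfrac{1}{2\cos\theta}e^{\pm i\theta}$ with $\theta\in\left(0,\tfrac{\pi}{2}\right)$, hence $i(C_n,x)=2r^n\cos(n\theta)$, and your bookkeeping is right: the admissible angles $\theta_i=\tfrac{(2i-1)\pi}{2n}$ with $2i-1<n$ number exactly $\lfloor n/2\rfloor$, the strict monotonicity of cosine on $\left(0,\tfrac{\pi}{2}\right)$ gives distinctness, and since $\deg i(C_n,x)=\alpha(C_n)=\lfloor n/2\rfloor$, these exhaust all roots, so no root with $1+4x\ge 0$ can exist. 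This is essentially the classical Lucas-polynomial/Chebyshev derivation, which is also the spirit of the cited source; within the context of this paper your write-up would serve as a self-contained replacement for the citation.
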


\begin{cor}\label{cor:cycledivision}
For odd $n$ and $k\neq 1$, $k|n$ if and only if $i(C_k,x)|i(C_n,x)$.
\end{cor}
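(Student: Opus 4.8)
The plan is to work entirely with the roots of the two independence polynomials, which are given explicitly by Theorem~\ref{thm:cycleroots}. The crucial feature there is that the roots of $i(C_m,x)$ are \emph{all distinct}, so over $\mathbb{C}$ each of these polynomials splits into distinct linear factors. Consequently, for $m=n$ and $m=k$ (both polynomials having rational coefficients and only simple roots), the divisibility $i(C_k,x)\mid i(C_n,x)$ is equivalent to the containment of root sets: every root of $i(C_k,x)$ is also a root of $i(C_n,x)$. The descent from $\mathbb{C}[x]$ to $\mathbb{Q}[x]$ is automatic here, since exact division of rational polynomials yields a rational quotient, so it suffices to argue over $\mathbb{C}$. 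This reduces the corollary to a purely arithmetic statement about the angles in Theorem~\ref{thm:cycleroots}.

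Next I would strip away the transcendental packaging. The map $\theta\mapsto -\tfrac{1}{2(1+\cos\theta)}$ is injective on $(0,\pi)$, because $\cos$ is strictly decreasing there and $c\mapsto -\tfrac{1}{2(1+c)}$ is strictly monotone for $c>-1$; moreover every angle appearing in Theorem~\ref{thm:cycleroots}, namely $\tfrac{(2i-1)\pi}{n}$ with $2i-1\le n-2$, lies in $(0,\pi)$. Hence root containment is equivalent to containment of the angle sets, i.e.
\[
\Big\{\tfrac{(2j-1)\pi}{k}: 1\le j\le \lfloor k/2\rfloor\Big\}\subseteq \Big\{\tfrac{(2i-1)\pi}{n}: 1\le i\le \lfloor n/2\rfloor\Big\}.
\]

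For the backward direction this is immediate: taking $j=1$, the angle $\tfrac{\pi}{k}$ must equal some $\tfrac{(2i-1)\pi}{n}$, which forces $n=(2i-1)k$ and hence $k\mid n$ (and incidentally $k$ odd, since $n$ is). For the forward direction, write $n=km$ with $m$ odd (odd because $n$ is). Given any $j$, I would rewrite $\tfrac{(2j-1)\pi}{k}=\tfrac{(2j-1)m\,\pi}{n}$ and observe that $(2j-1)m$ is odd, so it has the form $2i-1$ with $i=\tfrac{(2j-1)m+1}{2}\in\mathbb{Z}$. The one thing that genuinely needs checking is that this index stays in range, i.e. $1\le i\le \lfloor n/2\rfloor=\tfrac{n-1}{2}$; the upper bound follows from the extreme case $j=\tfrac{k-1}{2}$, where $(2j-1)m=(k-2)m=n-2m\le n-2$. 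Distinct $j$ plainly produce distinct $i$, so all $\tfrac{k-1}{2}$ roots of $i(C_k,x)$ occur among the roots of $i(C_n,x)$, completing this direction.

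The argument carries no serious analytic content. The point to get right is the equivalence between polynomial divisibility and root-set containment, which hinges on the simplicity of the roots guaranteed by Theorem~\ref{thm:cycleroots}, together with the range check $i\le\tfrac{n-1}{2}$ in the forward direction; I would not expect any obstacle beyond this indexing care.
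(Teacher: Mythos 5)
Your proof is correct. The forward direction is essentially the paper's own argument: you write $n=km$, match each angle $\frac{(2j-1)\pi}{k}$ with $\frac{(2j-1)m\pi}{n}$ via the index $i=\frac{(2j-1)m+1}{2}$ (the paper writes $q$ for your $m$), use oddness of $n$ to see that $i$ is an integer, and then pass from root containment plus simplicity of the roots (Theorem~\ref{thm:cycleroots}) to divisibility; your explicit range check $(k-2)m=n-2m\le n-2$ fills in a step the paper merely asserts. Where you genuinely diverge is the backward direction. The paper argues via leading coefficients: if $i(C_k,x)\mid i(C_n,x)$ then the leading coefficient of $i(C_k,x)$ (which is $2$ for even $k$ and $k$ for odd $k$) must divide the leading coefficient $n$ of $i(C_n,x)$, and oddness of $n$ rules out even $k$, giving $k\mid n$. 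You instead match roots: divisibility forces the root of $i(C_k,x)$ with angle $\pi/k$ to be a root of $i(C_n,x)$, and injectivity of $\theta\mapsto -\frac{1}{2(1+\cos\theta)}$ on $(0,\pi)$ then forces $\pi/k=\frac{(2i-1)\pi}{n}$, i.e.\ $n=(2i-1)k$. Your route needs the (easy but necessary) injectivity observation, which the paper never uses; in exchange it stays entirely inside the root picture and sidesteps a subtlety the paper leaves implicit: divisibility in $\mathbb{Q}[x]$ does not by itself give integer divisibility of leading coefficients --- one needs the quotient to lie in $\mathbb{Z}[x]$, e.g.\ by Gauss's lemma applied to these primitive (constant term $1$) polynomials. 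Your version also yields slightly more information, namely that $n/k$ is odd.
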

\begin{proof}
Let $n$ be odd. First suppose $k|n$. Then let $n=qk$ for some positive integer $q$. By Theorem~\ref{thm:cycleroots}, we only have to show that for all $j=1,2,\ldots,\lfloor\frac{k}{2}\rfloor$ there exists an $i$ from $1\le i\le \lfloor\frac{n}{2}\rfloor$ such that $\frac{(2i-1)\pi}{n}=\frac{(2j-1)\pi}{k}$. This happens if and only if 
$$i=\frac{(2j-1)q+1}{2}.$$

Since $n$ is odd, it follows that $q$ is also odd and therefore $i$ is indeed an integer and since $j\le \lfloor\frac{k}{2}\rfloor$, $1\le i\le \lfloor\frac{n}{2}\rfloor$. Thus every root of $i(C_k,x)$ is also a root of $i(C_n,x)$. Let $i(C_k,x)=(x-r_1)(x-r_2)\ldots(x-r_{\lfloor\frac{k}{2}\rfloor})$ where the $r_i$'s are the roots of $i(C_k,x)$. Since all roots of $i(C_k,x)$ are also roots of $i(C_n,x)$, it follows that $i(C_n,x)=(x-r_1)(x-r_2)\ldots(x-r_{\lfloor\frac{k}{2}\rfloor})g(x)$ for some polynomial $g(x)$ and therefore $i(C_k,x)|i(C_n,x)$.

Conversely suppose $i(C_k,x)|i(C_n,x)$. Then the leading coefficient of $i(C_k,x)$ must divide the leading coefficient of $i(C_n,x)$. From Theorem \ref{thm:PathPoly} and Proposition \ref{prop:deletion}, as $n$ is odd then the leading coefficient of $i(C_n,x)$ is $n$. Furthermore the leading coefficient of $i(C_k,x)$ is either 2 if $k$ is even or $k$ if $k$ is odd. As $n$ is odd then $2 \not | n$ and hence $k|n$.
\end{proof}

\begin{lemma}
\label{lem:cycleuni}
Let $p$ be an odd prime and $n\ge 1$. Then every irreducible factor of $i(C_{p^n},x)$ is unicyclic. 
\end{lemma}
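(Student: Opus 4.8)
The plan is to exhibit the \emph{complete} irreducible factorization of $i(C_{p^n},x)$ as a telescoping product of independence polynomials of smaller prime-power cycles, and then show that each factor in that product is both irreducible and unicyclic. Concretely, set $Q_1(x)=i(C_p,x)$ and, for $2\le j\le n$, set $Q_j(x)=i(C_{p^j},x)/i(C_{p^{j-1}},x)$. By Corollary~\ref{cor:cycledivision} (with $k=p^{j-1}\neq 1$) the division is exact, and since $i(C_{p^{j-1}},x)$ has constant term $1$ it is primitive, so Gauss's lemma gives $Q_j(x)\in\mathbb{Z}[x]$; telescoping yields $i(C_{p^n},x)=\prod_{j=1}^n Q_j(x)$. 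For unicyclicity, note that $C_{p^j}$ is a unicyclic graph, so $i(C_{p^j},x)$ is a unicyclic polynomial (its first three coefficients are $1$, $p^j$, $\binom{p^j}{2}-p^j$), and likewise $i(C_{p^{j-1}},x)$. Applying Lemma~\ref{lem:productunicyclic} to $i(C_{p^j},x)=i(C_{p^{j-1}},x)\cdot Q_j(x)$ shows each $Q_j(x)$ is unicyclic.

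The crux is proving each $Q_j$ is irreducible over $\mathbb{Q}$, which I would do by a mod-$p$ analysis generalizing Proposition~\ref{prop:primecycles}. First I would show $i(C_{p^m},x)\equiv 1\pmod p$ for all $m\ge 1$. By the same computation as in Proposition~\ref{prop:primecycles} (Theorem~\ref{thm:PathPoly} together with Proposition~\ref{prop:deletion}), the coefficient of $x^k$ in $i(C_{p^m},x)$ is $\frac{p^m}{p^m-k}\binom{p^m-k}{k}=\frac{p^m}{k}\binom{p^m-k-1}{k-1}$ for $k\ge 1$. Since $1\le k\le\lfloor p^m/2\rfloor<p^m$ forces $v_p(k)\le m-1$, the $p$-adic valuation of this integer is at least $m-v_p(k)\ge 1$, so $p$ divides every nonconstant coefficient. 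Reducing $i(C_{p^j},x)=i(C_{p^{j-1}},x)Q_j(x)$ modulo $p$ then gives $Q_j(x)\equiv 1\pmod p$.

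Now I would read off the coefficient data needed for Eisenstein. Because $p^j$ is odd, $i(C_{p^j},x)$ has leading coefficient $p^j$ (as recorded in the proof of Corollary~\ref{cor:cycledivision}), so $Q_j(x)$ has constant term $1$, leading coefficient $p$, and every intermediate coefficient divisible by $p$. Hence the reciprocal polynomial $x^{\deg Q_j}Q_j(1/x)$ is monic, has all lower coefficients divisible by $p$, and has constant term exactly $p$ (so not divisible by $p^2$); Eisenstein's criterion applies, making the reciprocal, and therefore $Q_j$, irreducible over $\mathbb{Q}$. Consequently $i(C_{p^n},x)=\prod_{j=1}^n Q_j(x)$ is a factorization into irreducibles, and by unique factorization in $\mathbb{Q}[x]$ every irreducible factor of $i(C_{p^n},x)$ is, after normalizing its constant term to $1$, one of the $Q_j$, each of which is unicyclic.

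The main obstacle is precisely this irreducibility step. Lemma~\ref{lem:productunicyclic} easily forces the telescoping quotients to be unicyclic, but to conclude that every \emph{irreducible} factor is unicyclic one must rule out the quotients $Q_j$ splitting further, since unicyclicity is not inherited by arbitrary factors. This is where primality is essential, and the delicate point is verifying the $p$-adic divisibility of all nonconstant coefficients, which reduces to the valuation estimate for $\frac{p^m}{k}\binom{p^m-k-1}{k-1}$; once that is in hand, Eisenstein applied to the reciprocal polynomial finishes the argument exactly as in the prime case $n=1$.
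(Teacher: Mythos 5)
Your proposal is correct and is essentially the paper's own proof: the paper proceeds by induction on $n$, at each step writing $i(C_{p^{k+1}},x)=i(C_{p^k},x)r(x)$ (your $Q_{k+1}$, obtained from Corollary~\ref{cor:cycledivision}), deducing unicyclicity of $r$ from Lemma~\ref{lem:productunicyclic}, and proving irreducibility by showing that $p$ divides every non-constant coefficient of $r$ while the leading coefficient is exactly $p$, then applying Eisenstein to $x^{m}r(1/x)$ --- precisely your argument, merely packaged inductively rather than as an explicit telescoping product. The only difference is presentational: you obtain the $p$-divisibility of the quotient's coefficients by reducing the factorization mod $p$ (via $i(C_{p^m},x)\equiv 1 \pmod{p}$), whereas the paper extracts the same fact term by term from the coefficient convolution identity (\ref{eq:primepwrsum}).
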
 

\begin{proof}
The proof is by induction on $n$. For $n=1$, that case was handled in Proposition~\ref{prop:primecycles}. Suppose the result holds for $n\le k$ for some $k\ge 1$. Now from Corollary~\ref{cor:cycledivision}, we know that $i(C_{p^{k}},x)|i(C_{p^{k+1}},x)$. Let $i(C_{p^{k+1}},x)=i(C_{p^k},x)r(x)$. We claim that $r(x)$ is irreducible and unicyclic. The fact that $r(x)$ is unicyclic follows from the inductive hypothesis and Lemma~\ref{lem:productunicyclic}, once we show that $r(x)$ is irreducible.

Similarly to the proof of Proposition~\ref{prop:primecycles}, we derive and expression for the coefficients of $i(C_{p^k},x)$

$$i(C_{p^k},x)=1+\sum_{j=1}^{\lfloor \frac{p^k}{2} \rfloor}{ {p^k -j} \choose {j}}\left(\frac{p^k}{p^k-j}\right)x^j.
$$

Note that $p$ divides each coefficient above except the constant term as $\frac{p^k}{p^k-j}$ must be an integer and $p^k-j$ has at most $k-1$ factors of $p$ for all $1\le j\le p^k-1$.

Let $r(x)=r_0+r_1x+r_2x^2+\cdots+r_{m}x^m$. 

Since $i(C_{p^{k+1}},x)=i(C_{p^k},x)r(x)$, we must have

\begin{align}
{ {p^{k+1} -j} \choose {j}}\left(\frac{p^{k+1}}{p^{k+1}-j}\right)&=\sum_{i=0}^{j}\left(r_i{ {p^k -(j-i)} \choose {j-i}}\left(\frac{p^k}{p^k-(j-i)}\right)\right)\label{eq:primepwrsum}
\end{align}
for $j=0,1,\ldots,\lfloor \frac{p^{k+1}}{2} \rfloor$.

As noted earlier, since $p\vert\frac{p^{k+1}}{p^{k+1}-j}$ for $1\le j\le p^{k+1}-1$, $p$ must divide the sum on the right hand side of (\ref{eq:primepwrsum}). Since we know $p$ divides each coefficient of $i(C_{p^k},x)$ except the constant term, it follows that $p|r_j$ for all $j=1,2,\ldots,m$.  Also, since $p^kr_m=p^{k+1}$, it follows that $r_m=p$. So by Eisenstein's Criterion applied to $x^mr(\tfrac{1}{x})$, it follows that $r(x)$ is irreducible.
\end{proof}

\begin{theorem}
\label{thm:powerprimecycles}
For $k,p \in \mathbb{N}$ where $p \geq 5$ is prime, $[C_{p^k}] = \{C_{p^k},D_{p^k}\}$.
\end{theorem}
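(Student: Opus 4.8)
The plan is to reduce to the disconnected case and then exploit the irreducible factorization supplied by Lemma~\ref{lem:cycleuni}. Suppose $G \sim C_{p^k}$. If $G$ is connected, Theorem~\ref{thm:cycleconnectedequivclass} immediately gives $G \cong C_{p^k}$ or $G \cong D_{p^k}$, so assume $G$ is disconnected with connected components $G_1, \dots, G_t$ where $t \ge 2$. By Proposition~\ref{prop:deletion}(ii), $i(C_{p^k},x) = \prod_{i=1}^{t} i(G_i,x)$.

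First I would pin down the irreducible factorization of $i(C_{p^k},x)$. Iterating Corollary~\ref{cor:cycledivision} gives the chain $i(C_p,x)\mid i(C_{p^2},x)\mid\cdots\mid i(C_{p^k},x)$, and the Eisenstein argument inside the proof of Lemma~\ref{lem:cycleuni} shows each successive quotient $q_l := i(C_{p^l},x)/i(C_{p^{l-1}},x)$ is irreducible (with $q_1 := i(C_p,x)$, irreducible by Proposition~\ref{prop:primecycles}). Thus $i(C_{p^k},x)=q_1q_2\cdots q_k$ where $q_1\cdots q_l = i(C_{p^l},x)$ and, by Lemma~\ref{lem:cycleuni}, every $q_l$ is unicyclic. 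Because $p\ge 5$, the degrees $\deg q_l = \tfrac12 p^{\,l-1}(p-1)$ (and $\deg q_1 = \tfrac{p-1}{2}\ge 2$) are strictly increasing and positive, so the $q_l$ are pairwise distinct and $i(C_{p^k},x)$ is squarefree. Since $\mathbb{Q}[x]$ is a UFD and every factor has constant term $1$, unique factorization forces $i(G_i,x)=\prod_{j\in S_i}q_j$ for some partition $S_1,\dots,S_t$ of $\{1,\dots,k\}$ into $t\ge 2$ nonempty blocks. By the computation underlying Lemma~\ref{lem:productunicyclic} (a product of unicyclic polynomials is again unicyclic), each $i(G_i,x)$ is unicyclic; as $G_i$ is connected, every component is a connected unicyclic graph.

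To finish, I would argue that each block $S_i$ must in fact be an initial segment $\{1,\dots,l\}$. The blocks that are initial segments are precisely those with $\prod_{j\in S_i}q_j = i(C_{p^l},x)$, which are realizable by connected graphs (namely $C_{p^l}$ and $D_{p^l}$, by Theorem~\ref{thm:cycleconnectedequivclass}); but any two distinct initial segments share the index $1$, so a partition into $t\ge 2$ nonempty blocks can contain at most one of them. Hence at least one component $G_i$ would have $i(G_i,x)=\prod_{j\in S_i}q_j$ with $S_i$ not an initial segment, and the heart of the proof is to show this is impossible.

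The hard part is exactly this last step: ruling out a connected unicyclic graph whose independence polynomial is a ``non-initial'' sub-product of the $q_j$'s. The route I would pursue is that an independence polynomial must have nonnegative coefficients, so it suffices to show each such exotic sub-product acquires a negative coefficient when $p\ge 5$; a leading-coefficient computation shows $\operatorname{lc}(q_j)=p$, so such a component would have exactly $p^{|S_i|}$ maximum independent sets, and I would play this against the forced unicyclic normalization of the low-order coefficients. This is precisely where $p\ge 5$ is essential: for $p=3$ the single quotient $i(C_9,x)/i(C_3,x)$ \emph{is} realized by a connected graph, producing a disconnected member $C_3\cup(\cdots)$ of $[C_9]$, so any correct argument must genuinely break down at $p=3$. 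Once non-initial blocks are excluded, every $S_i$ is an initial segment, $t=1$ follows, contradicting $t\ge 2$; therefore $G$ is connected and $[C_{p^k}]=\{C_{p^k},D_{p^k}\}$.
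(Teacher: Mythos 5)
Your set-up is sound and overlaps with the paper's own use of Lemma~\ref{lem:cycleuni}: the factorization $i(C_{p^k},x)=q_1q_2\cdots q_k$ into distinct irreducible unicyclic factors is correct, unique factorization in $\mathbb{Q}[x]$ does force each component's polynomial to be a sub-product $\prod_{j\in S_i}q_j$ over a partition of $\{1,\dots,k\}$, and each component is then a connected unicyclic graph. But there is a genuine gap, and it is exactly the step you flag yourself: you never show that a non-initial block (e.g.\ $i(G_i,x)=q_2$, or $q_2q_3$) cannot be realized by a connected graph. What you offer in its place is a strategy --- show every such exotic sub-product has a negative coefficient --- with no computation or lemma behind it, and it is far from clear it can be carried out: nothing in your argument controls the signs of the coefficients of $q_2=i(C_{p^2},x)/i(C_p,x)$, and the $p=3$ example you cite shows that such quotients can be honest independence polynomials of connected graphs, so any sign argument must locate precisely where $p\ge 5$ enters; your sketch does not. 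Noting that $\operatorname{lc}(q_j)=p$ (true) and that a component realizing $S_i$ would have $p^{|S_i|}$ maximum independent sets does not by itself yield a contradiction. As written, this is an outline whose central claim is unproven.

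For comparison, the paper completes the argument without ever classifying which sub-products are realizable. It uses Lemma~\ref{lem:cycleuni} only to conclude that each component is unicyclic, then switches to combinatorics: since $3\nmid p^k$, Corollary~\ref{cor:cycledivision} rules out $C_3$ components, so every triangle contains a vertex of degree at least three; feeding this into the degree-sequence identities of Lemma~\ref{lem:eq3} forces $g_i=0$ for $i\ge 4$, $g_3=n(C_3)$ and $g_1=g_3$, from which every component is a cycle or a $D$-graph. Replacing each $D$-component by its equivalent cycle, $G$ is independence equivalent to $C_{n_1}\cup\cdots\cup C_{n_r}$, and comparing the leading coefficient with the coefficient of $x$ gives $n_1n_2\cdots n_r=n_1+n_2+\cdots+n_r$, impossible for $r\ge 2$ when every $n_j\ge 3$. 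If you want to salvage your route, that degree-sequence argument is the missing ingredient that upgrades ``each component is unicyclic'' to ``each component is a cycle or a $D$-graph''; at that point your non-initial blocks are excluded automatically, since a component equivalent to $C_m$ forces $i(C_m,x)\mid i(C_{p^k},x)$, hence $m\mid p^k$ by Corollary~\ref{cor:cycledivision}.
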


\begin{proof}
Suppose $G \sim C_{n}$ and $G \not\cong C_{n}$ where $n=p^k$. Then $G$ has $n$ vertices and $n$ edges. Then by Lemma \ref{lem:eq3} we obtain the following three equations:

$$\sum_{i=0}^{n-1} g_i = n, \text{     }\sum_{i=1}^{n-1} i \cdot g_i = 2n, \text{     } \sum_{i=2}^{n-1} {i \choose 2}g_i =n+n(C_3).$$

\noindent Thus, 

\[n(C_3)=\sum_{i=0}^{n-1} g_i+ \sum_{i=2}^{n-1} {i \choose 2}g_i - \sum_{i=1}^{n-1} i \cdot g_i = g_0 + \sum_{i=3}^{n-1} \left({i \choose 2} -i+1 \right) g_i. \tag{1} \]

\noindent Furthermore, $G$ has no $C_3$ components, otherwise $i(C_3) | i(C_n)$ and  hence by Corollary \ref{cor:cycledivision}, $3|n$ which is a contradiction as $n=p^k$ for prime $p \geq 5$. Hence every induced $C_3$ has a vertex with degree 3 or greater. By Lemma \ref{lem:cycleuni}, every irreducible factor of $i(C_n)$ is unicyclic and hence every connected component of $G$ has the same number of vertices and edges and is therefore unicyclic. Therefore every vertex is part of at most one induced $C_3$. As every induced $C_3$ has a vertex with degree 3 or greater then

\[n(C_3) \leq \sum_{i=3}^{n-1} g_i.\]

\noindent Therefore by subtracting this inequality from equation $(1)$ we obtain

$$0 \geq g_0 + \sum_{i=3}^{n-1} \left({i \choose 2} -i \right) g_i.$$

\noindent As ${i \choose 2} -i \geq 2$ for $i \geq 4$ then $g_i=0$ for $i \neq 1,2 \text{ or }3$. Therefore, by equation (1) we have $g_3=n(C_3)$. We can also now simplify the sums given in Lemma~\ref{lem:eq3} to get $g_1+g_2+g_3=n$ and $g_1+2g_2+3g_3=2n$ and subtracting $2$ times the former from the latter we obtain $g_1=g_3$.

Consider the structure of $G$. Note that no two induced $C_3$ graphs intersect, as each vertex is in at most one. As $G$ has no $C_3$ components then each of the induced $C_3$ must contain at least one degree three vertex. As $d_3=n(C_3)$, each induced $C_3$ contains exactly one degree three vertex and there are no other degree three vertices in the graph. Now all that remains are degree one and two vertices. Hence the other neighbour of each degree three vertex is either a leaf or a degree two vertex. It is easy to see that if it is a degree two vertex, this must be the beginning of a path of degree two vertices ending in a leaf, otherwise we would contradict either the component being unicyclic or the number of degree three or greater vertices. This shows that each component is either a cycle or a $D$-graph.
As $D_{l_i} \sim C_{l_i}$, $G$ must be independently equivalent to a disjoint union of cycles.

Now let $G \sim C_{n_1} \cup C_{n_2} \cup \cdots \cup C_{n_r}$ for some $r \in \mathbb{N}$. Note each $n_j \geq 3$ as each component must have an equal number of vertices and edges. As the independence polynomial is multiplicative across components we have $i(G,x)= i(C_{n_1},x) \cdot i(C_{n_2},x) \cdots i(C_{n_r},x)$. It is easy to see from Theorem \ref{thm:PathPoly} and Proposition \ref{prop:deletion} that the leading coefficient and the coefficient of $x$ of $i(C_{n_j})$ are both $n_j$. Thus the leading coefficient of $i(G,x)$ is $n_1 \cdot n_2 \cdots n_r$ and the coefficient of $x$ is $n_1 + n_2 + \cdots +n_r$. However as $i(G,x) \sim i(C_n,x)$ then the leading coefficient and the coefficient of $x$ of $i(G,x)$ are both $n$. Thus $n_1 n_2 \cdots n_r=n_1 + n_2 + \cdots +n_r$. However a simple induction can show $n_1 \cdot n_2 \cdots n_r>n_1 + n_2 + \cdots +n_r$ for $r \geq 2$ and $n_j \geq 3$. As each $n_j\geq 3$ then $r=1$ and $G$ is connected. By Theorem \ref{thm:cycleconnectedequivclass}, we conclude that $[C_n] = \{C_n,D_n\}$.

\end{proof}

One notable exception to these results is $[C_{3^n}]$ when $n>1$. These cases are more difficult to deal with, as a graph in $[C_{3^n}]$ can have $C_3$ components which does not allow us the certainty of where the degree $3$ vertices are located among the components. We suspect that if $[C_n]$ grows large for certain $n$, then $n$ will be an odd multiple of $3$. For example, the only cycles that we know of with graphs other than $D_n$ and $C_n$ in their independence equivalence classes are $C_6$, $C_9$ and $C_{15}$. Oboudi showed in \cite{Oboudi2018} that $$[C_{9}]=\{C_{9},D_{9},G_1\cup C_3,G_2\cup C_3, G_3\cup C_3\}$$  where $G_1$, $G_2$, and $G_3$ are shown in Figure~\ref{fig:C9equivclass}.

\setcounter{subfigure}{0}
\begin{figure}[!h]
\def\c{0.6}
\def\r{1}
\centering
\subfigure[$G_1$]{
\scalebox{\c}{
\begin{tikzpicture}

\begin{scope}[every node/.style={circle,thick,fill,draw}]
    \node  (2) at (0.13397*\r,0.5*\r) {};
    \node (3) at (1*\r,0*\r) {};
    \node  (4) at (3*\r,0*\r) {};
    \node (7) at (0.13397*\r,-0.5*\r) {};
    \node (8) at (2*\r,0*\r) {};
    \node (9) at (-0.7*\r,-0.5*\r) {};   
\end{scope}

\begin{scope}

    \path [-] (2) edge node {} (3);
    \path [-] (3) edge node {} (7);
    \path [-] (4) edge node {} (8);
    \path [-] (2) edge node {} (7);
    \path [-] (3) edge node {} (8);
    \path [-] (9) edge node {} (7);
    
\end{scope}

\end{tikzpicture}}}
\qquad
\subfigure[$G_2$]{
\scalebox{\c}{
\begin{tikzpicture}

\begin{scope}[every node/.style={circle,thick,fill,draw}]
    \node (10) at (0.13397*\r,0.5*\r) {};
    \node (11) at (1*\r,0*\r) {};
    \node (12) at (3*\r,0*\r) {};
    \node  (15) at (0.13397*\r,-0.5*\r) {};
    \node (16) at (2*\r,0*\r) {};
    \node (17) at (-0.7*\r,0*\r) {};   
\end{scope}

\begin{scope}
    \path [-] (10) edge node {} (11);
    \path [-] (11) edge node {} (15);
    \path [-] (12) edge node {} (16);
    \path [-] (10) edge node {} (17);
    \path [-] (11) edge node {} (16);
    \path [-] (17) edge node {} (15);
    
\end{scope}
\end{tikzpicture}}}
\qquad
\subfigure[$G_3$]{
\scalebox{\c}{
\begin{tikzpicture}

\begin{scope}[every node/.style={circle,thick,fill,draw}]
    \node (18) at (9.13397*\r,0.5*\r) {};
    \node (19) at (10*\r,0.5*\r) {};
    \node (20) at (10.8*\r,0*\r) {};
    \node (21) at (11.8*\r,0*\r) {};
    \node (23) at (9.13397*\r,-0.5*\r) {};
    \node (24) at (10*\r,-0.5*\r) {};
\end{scope}

\begin{scope}
    \path [-] (18) edge node {} (19);
    \path [-] (20) edge node {} (21);
    \path [-] (19) edge node {} (20);
    \path [-] (20) edge node {} (24);
    \path [-] (23) edge node {} (24);
    \path [-] (18) edge node {} (23);
    
\end{scope}

\end{tikzpicture}}}
\caption{Components of the disconnected graphs in $[C_9]$}%
\label{fig:C9equivclass}%
\end{figure}
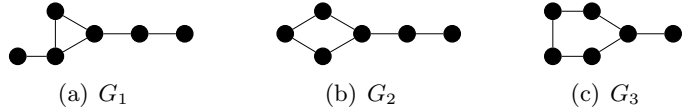

Computationally, we were able to show that  $$[C_{15}]=\{C_{15},D_{15},G_1\cup C_3\cup C_5,G_2\cup C_3\cup C_5,G_3\cup C_3\cup C_5\}$$ where $G_1'$, $G_2'$, and $G_3'$ are shown in Figure~\ref{fig:C15equivclass}.

\setcounter{subfigure}{0}
\begin{figure}[!h]
\def\c{0.6}
\def\r{1}
\centering
\subfigure[$G_1'$]{
\scalebox{\c}{
\begin{tikzpicture}

\begin{scope}[every node/.style={circle,thick,fill,draw}]
    \node  (2) at (0.13397*\r,0.5*\r) {};
    \node (3) at (1*\r,0*\r) {};
    \node  (4) at (3*\r,0*\r) {};
    \node (5) at (4*\r,0*\r) {};
    \node (7) at (0.13397*\r,-0.5*\r) {};
    \node (8) at (2*\r,0*\r) {};
    \node (9) at (-0.7*\r,-0.5*\r) {};   
\end{scope}

\begin{scope}

    \path [-] (2) edge node {} (3);
    \path [-] (4) edge node {} (5);
    \path [-] (3) edge node {} (7);
    \path [-] (4) edge node {} (8);
    \path [-] (2) edge node {} (7);
    \path [-] (3) edge node {} (8);
    \path [-] (9) edge node {} (7);
    
\end{scope}

\end{tikzpicture}}}
\qquad
\subfigure[$G_2'$]{
\scalebox{\c}{
\begin{tikzpicture}

\begin{scope}[every node/.style={circle,thick,fill,draw}]
    \node (10) at (0.13397*\r,0.5*\r) {};
    \node (11) at (1*\r,0*\r) {};
    \node (12) at (3*\r,0*\r) {};
    \node (13) at (4*\r,0*\r) {};
    \node  (15) at (0.13397*\r,-0.5*\r) {};
    \node (16) at (2*\r,0*\r) {};
    \node (17) at (-0.7*\r,0*\r) {};   
\end{scope}

\begin{scope}
    \path [-] (10) edge node {} (11);
    \path [-] (12) edge node {} (13);
    \path [-] (11) edge node {} (15);
    \path [-] (12) edge node {} (16);
    \path [-] (10) edge node {} (17);
    \path [-] (11) edge node {} (16);
    \path [-] (17) edge node {} (15);
    
\end{scope}
\end{tikzpicture}}}
\qquad
\subfigure[$G_3'$]{
\scalebox{\c}{
\begin{tikzpicture}

\begin{scope}[every node/.style={circle,thick,fill,draw}]
    \node (18) at (9.13397*\r,0.5*\r) {};
    \node (19) at (10*\r,0.5*\r) {};
    \node (20) at (10.8*\r,0*\r) {};
    \node (21) at (11.8*\r,0*\r) {};
    \node (23) at (9.13397*\r,-0.5*\r) {};
    \node (24) at (10*\r,-0.5*\r) {};
    \node (25) at (8.3*\r,0*\r) {};   
\end{scope}

\begin{scope}
    \path [-] (18) edge node {} (19);
    \path [-] (20) edge node {} (21);
    \path [-] (19) edge node {} (20);
    \path [-] (20) edge node {} (24);
    \path [-] (18) edge node {} (25);
    \path [-] (23) edge node {} (24);
    \path [-] (25) edge node {} (23);
    
\end{scope}

\end{tikzpicture}}}
\caption{Components of the disconnected graphs in $[C_9]$}%
\label{fig:C15equivclass}%
\end{figure}
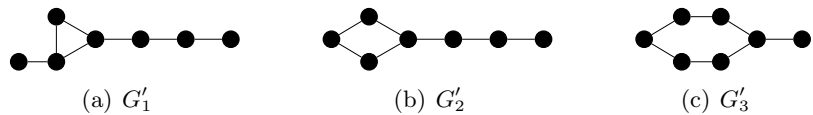

Despite the similarities between $[C_9]$ and $[C_{15}]$, we were able to computationally verify that $[C_{21}]=\{C_{21},D_{21}\}$ and $[C_{27}]=\{C_{27},D_{27}\}$.

\section{Concluding Remarks}\label{sec:conclusion}
Building on previous work in the literature, we explored the independence equivalence classes of paths and cycles. These were completely determined for connected graphs in \cite{Li2016} and \cite{Oboudi2018} respectively and our work extended this by considering disconnected graphs that belong to the independence equivalence classes. We showed that paths of odd length are independence unique, while paths of even length can have arbitrarily many graphs in their independence equivalence classes. For cycles, we showed that $[C_{n}]=\{C_{n},D_{n}\}$ when $n$ is even and not equal to $6$, or any power of a prime (the prime being at least $5$). We are left with some open problems.

\begin{problem}
What graphs can be in $[P_{2n}]$?
\end{problem}
We showed that $|[P_{2n}]|$ is unbounded, but this involved showing that $[P_{2n}]$ consisted of disjoint unions of cycles, graphs independence equivalent to cycles, and a path. However using the program Nauty \cite{McKay2014}, we were able to computationally determine that $|[P_{10}]|=10$. In addition to the $7$ graphs that we expect employing methods from the proof of Proposition~\ref{prop:largeevenpaths}, we found the $3$ surprising graphs in Figure~\ref{fig:P10equivclass}. What other graphs can belong to $[P_{2n}]$?

\setcounter{subfigure}{0}
\begin{figure}[!h]
\def\c{0.7}
\def\r{1}
\centering
\subfigure[$H_1$]{
\scalebox{\c}{
\begin{tikzpicture}

\begin{scope}[every node/.style={circle,thick,fill,draw}]
    \node (1) at (0*\r,0*\r) {};
    \node (2) at (1*\r,0*\r) {};
    \node (3) at (0.5*\r,1*\r) {};
   
    \node (4) at (2*\r,0*\r) {};
    \node (5) at (2*\r,1*\r) {};
    \node (6) at (3*\r,1*\r) {};
    \node (7) at (4*\r,1*\r) {};   
    \node (8) at (3*\r,0*\r) {}; 
    \node (9) at (4*\r,0*\r) {}; 
    \node (10) at (5*\r,0*\r) {};   
  
\end{scope}

\begin{scope}
\path [-] (2) edge node {} (3);
    \path [-] (1) edge node {} (2);
    \path [-] (1) edge node {} (3);
    
    \path [-] (4) edge node {} (5);
    \path [-] (5) edge node {} (6);
    \path [-] (6) edge node {} (7);
    \path [-] (6) edge node {} (8);
    \path [-] (8) edge node {} (9);
    \path [-] (9) edge node {} (10);
\end{scope}

\end{tikzpicture}}}
\qquad
\subfigure[$H_2$]{
\scalebox{\c}{
\begin{tikzpicture}
\begin{scope}[every node/.style={circle,thick,fill,draw}]
    \node (1) at (0*\r,0*\r) {};
    \node (2) at (1*\r,0*\r) {};
    \node (3) at (0.5*\r,1*\r) {};
   
    \node (4) at (2*\r,0*\r) {};
    \node (5) at (2*\r,1*\r) {};
    
    \node (6) at (3*\r,1*\r) {};
    \node (7) at (4*\r,1*\r) {};   
    \node (8) at (3*\r,0*\r) {}; 
    \node (9) at (4*\r,0*\r) {}; 
    \node (10) at (5*\r,0*\r) {};   
  
\end{scope}

\begin{scope}
    \path [-] (2) edge node {} (3);
    \path [-] (1) edge node {} (2);
    \path [-] (1) edge node {} (3);
    
    \path [-] (4) edge node {} (5);
    
    \path [-] (7) edge node {} (9);
    \path [-] (6) edge node {} (7);
    \path [-] (6) edge node {} (8);
    \path [-] (8) edge node {} (9);
    \path [-] (9) edge node {} (10);
\end{scope}
\end{tikzpicture}}}
\qquad
\subfigure[$H_3$]{
\scalebox{\c}{
\begin{tikzpicture}

\begin{scope}[every node/.style={circle,thick,fill,draw}]
    \node (1) at (0*\r,0*\r) {};
    \node (2) at (1*\r,0*\r) {};
    \node (3) at (0.5*\r,1*\r) {};
   
    \node (4) at (2*\r,0*\r) {};
    \node (5) at (2*\r,1*\r) {};
    
    \node (6) at (3*\r,1*\r) {};
    \node (7) at (4*\r,1*\r) {};   
    \node (8) at (3*\r,0*\r) {}; 
    \node (9) at (4*\r,0*\r) {}; 
    \node (10) at (5*\r,0.5*\r) {};   
  
\end{scope}

\begin{scope}
    \path [-] (2) edge node {} (3);
    \path [-] (1) edge node {} (2);
    \path [-] (1) edge node {} (3);
    
    \path [-] (4) edge node {} (5);
    
    \path [-] (7) edge node {} (9);
    \path [-] (6) edge node {} (7);
    \path [-] (7) edge node {} (10);
    \path [-] (8) edge node {} (9);
    \path [-] (9) edge node {} (10);
\end{scope}

\end{tikzpicture}}}
\caption{Surprising graphs in $[P_{10}]$.}
\label{fig:P10equivclass}%
\end{figure}
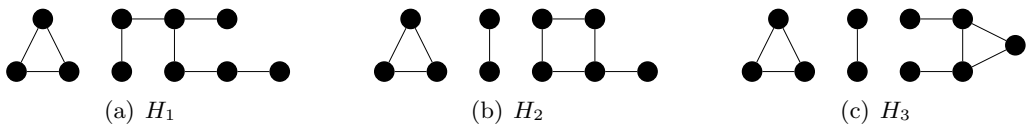

\begin{problem}
What graphs can be in $[C_{3n}]$?
\end{problem}
Multiples of $3$ make things more difficult when trying to characterize the equivalence classes of cycles as graphs in these classes can have triangle components. In fact, the only cycles we know of where $[C_n]\neq \{C_n,D_n\}$ are cycles with $n=3k$ for $k$ odd. Not every multiple of three has this property however, as $C_{21}$ is only equivalent to itself and $D_{21}$. Does $[C_{3n}]$ eventually stabilize to the two graphs we expect, or can it grow like the independence equivalence classes of even paths?

\begin{problem}
Are there families of graphs such that the independence equivalence class is unbounded \textbf{and} each independence polynomial is irreducible?
\end{problem}
We saw that $i(C_p,x)$ was irreducible and $|[C_{p}]|=2$ for all primes $p\ge 3$. An irreducible independence polynomial implies that all graphs in the independence equivalence class are connected. The restriction of connected graphs and irreducibility seems that it would make it less likely to have large independence equivalence classes, but the question remains open. We also think that studying the irreducibility of independence polynomials can be useful when studying independence equivalence classes of other graphs.

Finally, we leave the reader with a conjecture that all of our results  and computational work has lead us to believe is true.

\begin{conj}
If $3\!\!\!\not\vert n$ and $n\ge 4$, then $[C_{n}]=\{C_n,D_n\}$.
\end{conj}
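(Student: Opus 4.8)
The plan is to reduce the conjecture to a single algebraic fact that generalizes Lemma~\ref{lem:cycleuni} from prime powers to arbitrary odd orders: \emph{every irreducible factor of $i(C_n,x)$ is unicyclic whenever $n$ is odd}. Granting this, the conjecture follows quickly. The even case (with $n\neq 6$) is already Theorem~\ref{thm:evencycle}, so assume $n$ is odd with $3\nmid n$ and $n\ge 5$, and suppose $G\sim C_n$ with $G\not\cong C_n$; by Theorem~\ref{thm:cycleconnectedequivclass} we may take $G$ disconnected. First I would record the easy converse of Lemma~\ref{lem:productunicyclic}: repeating its coefficient computation with $\ell=0$ in \emph{both} factors shows that a product of unicyclic polynomials is again unicyclic. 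Hence if every irreducible factor of $i(C_n,x)$ is unicyclic, then so is $i(H,x)$ for every component $H$ of $G$, which forces $|V(H)|=|E(H)|$ and makes each component a connected unicyclic graph. From here the argument of Theorem~\ref{thm:powerprimecycles} applies essentially verbatim: Corollary~\ref{cor:cycledivision} together with $3\nmid n$ rules out $C_3$ components, Lemma~\ref{lem:eq3} then forces $g_i=0$ for $i\ge 4$ and $g_1=g_3=n(C_3)$, so each component is a cycle or a $D$-graph; replacing each $D$-graph by the equivalent cycle, $G$ is equivalent to a disjoint union $C_{n_1}\cup\cdots\cup C_{n_r}$ with each $n_j\ge 3$, and comparing the leading coefficient $n_1\cdots n_r$ with the linear coefficient $n_1+\cdots+n_r$ (both equal to $n$) forces $r=1$, a contradiction.

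So the entire difficulty is concentrated in proving that every irreducible factor of $i(C_n,x)$ is unicyclic for odd $n$. I would prove the sharper statement that, writing $i(C_n,x)=\prod_{d\mid n,\,d>1}\psi_d(x)$, where $\psi_d$ collects the linear factors over those roots $r_i$ of Theorem~\ref{thm:cycleroots} that first appear at level $d$ (equivalently, the indices $i$ with $\gcd(2i-1,n)=n/d$), each $\psi_d$ is irreducible and unicyclic. The factorization makes sense because the roots are distinct (Theorem~\ref{thm:cycleroots}) and the level-$d$ roots form a Galois-stable set, so each $\psi_d\in\mathbb{Q}[x]$; normalizing via Gauss's lemma puts $\psi_d\in\mathbb{Z}[x]$ with constant term $1$, and one checks $\prod_{e\mid d}\psi_e=i(C_d)$. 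Unicyclicity of $\psi_d$ then follows by strong induction on $d$: since $i(C_d)$ is itself unicyclic (it has $d$ vertices and $d$ edges, so its first three coefficients are $1,\,d,\,\binom{d}{2}-d$) and $\prod_{e\mid d,\,1<e<d}\psi_e$ is unicyclic by the induction hypothesis and the product observation above, Lemma~\ref{lem:productunicyclic} gives that the remaining factor $\psi_d$ is unicyclic.

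The crux, and the step I expect to be the main obstacle, is the irreducibility of $\psi_d$ for composite $d$, where the clean single-prime Eisenstein argument of Lemma~\ref{lem:cycleuni} is unavailable. Here I would pass to the reciprocal and use the substitution $-1/r_i=2+2\cos\!\big((2i-1)\pi/d\big)$ coming from $1+\cos\theta=2\cos^2(\theta/2)$: the level-$d$ values $2\cos((2i-1)\pi/d)$ are exactly the numbers $\zeta+\zeta^{-1}$ as $\zeta$ ranges over the primitive $2d$-th roots of unity, i.e.\ the full Galois orbit of $2\cos(\pi/d)$. Since $[\mathbb{Q}(\zeta_{2d}+\zeta_{2d}^{-1}):\mathbb{Q}]=\varphi(2d)/2=\varphi(d)/2=\deg\psi_d$, the minimal polynomial of $2\cos(\pi/d)$ has degree exactly $\deg\psi_d$ and is irreducible; consequently the reciprocal of $\psi_d$ is, up to the harmless shift by $2$ and scaling, this minimal polynomial, so $\psi_d$ is irreducible over $\mathbb{Q}$. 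This reduces the whole conjecture to the classical irreducibility of the minimal polynomials of $2\cos(\pi/d)$ (equivalently, of the cyclotomic polynomials $\Phi_{2d}$). The delicate points I would verify carefully are that the level-$d$ roots really constitute a single Galois orbit (so that $\psi_d$ is irreducible rather than a product of conjugate factors) and that the integral normalization is compatible with the unicyclic constant-term condition; granting these, $\psi_d$ is irreducible and unicyclic for every odd $d$, and the conjecture follows for all odd $n$ with $3\nmid n$, completing the proof together with Theorem~\ref{thm:evencycle}.
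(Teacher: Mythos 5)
This statement is the paper's closing conjecture, so there is no proof in the paper to compare against---only the partial results Theorem~\ref{thm:evencycle} (even $n\neq 6$) and Theorem~\ref{thm:powerprimecycles} ($n=p^k$, $p\ge 5$ prime). As far as I can verify, your proposal is a correct resolution, and it isolates exactly why the authors stopped at prime powers: their Lemma~\ref{lem:cycleuni} rests on Eisenstein's criterion, which needs one prime dividing every middle coefficient and is unavailable for general odd $n$ (already for $n=35$ no such prime exists). Your replacement is sound. For odd $n$, Theorem~\ref{thm:cycleroots} gives $-1/r_i=2+2\cos\left(\tfrac{(2i-1)\pi}{n}\right)$, and writing $(2i-1)\pi/n=j\pi/d$ with $d=n/\gcd(2i-1,n)$ and $\gcd(j,2d)=1$, the level-$d$ values $2\cos(j\pi/d)$ are exactly $\zeta+\zeta^{-1}$ over primitive $2d$-th roots of unity $\zeta$; since $[\mathbb{Q}(\zeta_{2d}+\zeta_{2d}^{-1}):\mathbb{Q}]=\varphi(2d)/2=\varphi(d)/2$, this set is a single full Galois orbit, and because $t\mapsto -1/t$ is a rational map commuting with the Galois action, each $\psi_d$ is irreducible of degree $\varphi(d)/2$, with $\sum_{d\mid n,\,d>1}\varphi(d)/2=(n-1)/2$ confirming these are all the factors. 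The identity $i(C_d,x)=\prod_{e\mid d,\,e>1}\psi_e(x)$ holds (same simple roots, same constant term $1$), a product of unicyclic polynomials is unicyclic (the computation in Lemma~\ref{lem:productunicyclic} with $\ell=0$ throughout), and so your strong induction via Lemma~\ref{lem:productunicyclic} makes every irreducible factor of $i(C_n,x)$ unicyclic. That was the only place Theorem~\ref{thm:powerprimecycles} used $n=p^k$ apart from excluding $C_3$ components through Corollary~\ref{cor:cycledivision}, which your hypothesis $3\nmid n$ does directly; the remainder of that proof (Lemma~\ref{lem:eq3}, the structure analysis, and the product-versus-sum count) applies unchanged, and Theorem~\ref{thm:evencycle} covers even $n$ since $3\nmid n$ rules out $n=6$.

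Two details to write out, neither fatal. First, in the final count you take the leading coefficient of $i(C_{n_j},x)$ to be $n_j$; this holds only for odd $n_j$ (it is $2$ for even $n_j$), so you must first note that the leading coefficient $n$ of $i(C_n,x)$ is odd, forcing every $n_j$ to be odd---the paper's own proof of Theorem~\ref{thm:powerprimecycles} silently makes the same assumption. Second, the Gauss-lemma bookkeeping: the integer irreducible factors can all be normalized to constant term $+1$ because their constant terms are units whose product is $i(C_n,0)=1$, and then each component polynomial $i(H,x)$ equals the product of its factors exactly, which is what the unicyclicity-of-components step needs. With those points filled in, your argument settles the conjecture by a route the paper could not take: classical irreducibility of cyclotomic polynomials (equivalently, of the minimal polynomials of $2\cos(\pi/d)$) in place of Eisenstein's criterion.
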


\bibliographystyle{plain}
\bibliography{Trees}
\end{document}